\providecommand{\U}[1]{\protect\rule{.1in}{.1in}}
\newcommand\email[1]{{\href{mailto:#1}{\nolinkurl{#1}}}}
\newtheorem{theorem}{Theorem}
\newtheorem{corollary}[theorem]{Corollary}
\newtheorem{lemma}[theorem]{Lemma}
\newtheorem{remark}[theorem]{Remark}
\newenvironment{proof}[1][Proof]{\noindent\textbf{#1.} }{\ \rule{0.5em}{0.5em}}
\begin{document}

\title{Computing the first eigenpair of the $p$-Laplacian via inverse iteration of
sublinear supersolutions}
\author{{\small \textbf{Rodney Josu\'{e} BIEZUNER$^{\text{\,a}}$, Jed
BROWN$^{\text{\,b}}$, Grey ERCOLE$^{\text{\,a}}$, Eder Marinho
MARTINS$^{\text{\,c}}$}}\thanks{\textit{E-mail addresses:} rodney@mat.ufmg.br
(R. J. Biezuner), jed@59A2.org (J. Brown), grey@mat.ufmg.br (G. Ercole),
eder@iceb.ufop.br (E. Martins).}\\{\small {$^{\mathrm{a}}$} \textit{Departamento de Matem\'{a}tica - ICEx,
Universidade Federal de Minas Gerais,}}\\{\small \textit{Av. Ant\^{o}nio Carlos 6627, Caixa Postal 702, 30161-970, Belo
Horizonte, MG, Brazil.}} \\{\small {$^{\mathrm{b}}$} \textit{Laboratory of Hydraulics, Hydrology, and
Glaciology (VAW), ETH Z\"urich, 8092 Z\"urich, Switzerland,}}\\{\small \textit{Mathematics and Computer Science Division, Argonne National
Laboratory, Argonne, IL 60439, USA.}} \\{\small {$^{\mathrm{c}}$} \textit{Departamento de Matem\'{a}tica - ICEB,
Universidade Federal de Ouro Preto,}}\\{\small \textit{Campus Universit\'ario Morro do Cruzeiro, 35400-000, Ouro
Preto, MG, Brazil. }}}
\maketitle

\begin{abstract}
We introduce an iterative method for computing the first eigenpair $\left(
\lambda_{p},e_{p}\right)  $ for the $p$-Laplacian operator with homogeneous
Dirichlet data as the limit of $\left(  \mu_{q,}u_{q}\right)  $ as
$q\rightarrow p^{-}$, where $u_{q}$ is the positive solution of the sublinear
Lane-Emden equation $-\Delta_{p}u_{q}=\mu_{q}u_{q}^{q-1}$ with the same
boundary data. The method is shown to work for any smooth, bounded domain.
Solutions to the Lane-Emden problem are obtained through inverse iteration of
a super-solution which is derived from the solution to the torsional creep
problem. Convergence of $u_{q}$ to $e_{p}$ is in the $C^{1}$-norm and the rate
of convergence of $\mu_{q}$ to $\lambda_{p}$ is at least $O\left(  p-q\right)
$. Numerical evidence is presented.

\end{abstract}

\noindent{\small {\textit{Keywords:} $p$-Laplacian, first eigenvalue and
eigenfunction, inverse iteration, Lane-Emden problem, torsional creep
problem.}}

\section{Introduction}

In this paper we develop an iterative method to obtain the first eigenpair
$\left(  \lambda_{p},e_{p}\right)  $ of the eigenvalue problem
\begin{equation}
\left\{
\begin{array}
[c]{ll}%
-\Delta_{p}u=\lambda\left\vert u\right\vert ^{p-2}u & \text{ \ \ in }\Omega,\\
u=0 & \text{\ \ \ on }\partial\Omega,
\end{array}
\right.  \label{01}%
\end{equation}
where $\Delta_{p}u:=\operatorname{div}\left(  \left\vert \nabla u\right\vert
^{p-2}\nabla u\right)  $, $p>1$, is the $p$-Laplacian operator and
$\Omega\subset\mathbb{R}^{N}$, $N\geqslant2$, is any smooth, bounded domain.
The $p$-Laplacian equation appears in several mathematical models in fluid
dynamics, such as in the modelling of non-Newtonian fluids and glaciology
\cite{ADO, BR, GR, PR}, turbulent flows \cite{DT}, climatology \cite{DH}
nonlinear diffusion (where it is called the $N$-diffusion equation; see
\cite{Phillip} for the original article and \cite{GZ} for some current
developments), flow through porous media \cite{SW}, power law materials
\cite{AC} and in the study of torsional creep \cite{Kawohl}.

The first eigenvalue $\lambda_{p}$ of (\ref{01}) is variationally
characterized by%
\[
\lambda_{p}=\min_{u\in W_{0}^{1,p}\left(  \Omega\right)  /\left\{  0\right\}
}R\left(  u\right)  >0
\]
where $R$ is the Rayleigh quotient%
\[
R\left(  u\right)  =\dfrac{\int_{\Omega}\left\vert \nabla u\right\vert ^{p}%
dx}{\int_{\Omega}\left\vert u\right\vert ^{p}dx}.
\]
The first eigenfunction $e_{p}$ of (\ref{01}) is characterized by the fact
that the minimum of $R$ is attained at $e_{p}$, so that%
\[
\lambda_{p}=\dfrac{\int_{\Omega}\left\vert \nabla e_{p}\right\vert ^{p}%
dx}{\int_{\Omega}e_{p}^{p}dx}.
\]
It is well-known that $\lambda_{p}$ is isolated and simple, and that the
corresponding eigenfunction $e_{p}\in C^{1,\alpha}\left(  \overline{\Omega
}\right)  $ can be taken positive. Since $R$ is homogeneous, we may assume
$\Vert e_{p}\Vert_{\infty}=1$, where $\Vert\cdot\Vert_{\infty}$ stands for the
$L^{\infty}$-norm.

In the one-dimensional case the first eigenpair $\left(  \lambda_{p}%
,e_{p}\right)  $ is explicitly determined by solving the corresponding ODE
boundary value problem. If $\Omega=(a,b)$, then $\lambda_{p}=\left(  \pi
_{p}/\left(  b-a\right)  \right)  ^{p-1}$ and $e_{p}=(p-1)^{-1/p}\sin
_{p}\left(  \pi_{p}\left(  x-a\right)  /\left(  b-a\right)  \right)  $, where
$\pi_{p}:=2(p-1)^{1/p}\int_{0}^{1}(1-s^{p})^{-1/p}ds$ and $\sin_{p}$\ is a
$2\pi_{p}$-periodic function that generalizes the classical sine function (see
\cite{BEM2, Lindqvist}).

When $p=2$, we have $\Delta_{p}=\Delta$, the Laplacian operator, whose first
eigenpair $\left(  \lambda_{p},e_{p}\right)  $ is well-known for domains with
simple geometry (that is, domains which admit some kind of symmetry); for more
general domains it can be determined by several numerical methods (see
\cite{BEM3, Descloux-Tolley, Guidotti-Lambers, Heuveline, Kuttler-Sigillito}).
However, if $p\neq2$ and $N\geqslant2$, the first eigenpair is not explicitly
known even for simple symmetric domains such as a square or a ball, and there
are few available numerical methods to deal directly with the eigenproblem
(\ref{01}) in these domains (see \cite{BEM1, bognar2003, Bognar, BR,
Lefton,Yau}).

On the other hand, several numerical methods are available to solve
homogeneous Dirichlet problems for the (Poisson) $p$-Laplacian equation in the
form
\[
-\Delta_{p}u=f\left(  x\right)
\]
when $f$ depends only on $x\in\Omega$ (see \cite{AinsworthKay, Barrett,
Bermejo, Diening, Droniou, Veeser}). This fact motivated the development of an
inverse iteration method by some of the authors for finding the first
eigenpair in \cite{BEM1}. If $\Omega$ is a $N$-dimensional ball, the
convergence of the method was established and numerical evidence for its
applicability when $\Omega$ is a 2-dimensional square were also presented. In
the special case of the Laplacian operator, the method was proved to work in
general domains and can be also used to obtain other eigenpairs (see
\cite{BEM3}). However, since the method was based on the iteration of the
nonlinear $p$-Laplacian equation in (\ref{01}), the difficulties in dealing
with the nonlinearity on the right-hand side of the equation prevented showing
that the method works in any domain and any $p>1$.

In this work we consider a different inverse iteration approach, also based on
the solution of the Poisson $p$-Laplacian equation, but built around an
eigenproblem which has a sublinear nonlinearity on its right-hand side. This
type on nonlinearity is more manageable and we are able to prove that the
iterative method works for any smooth, bounded domain. It is based on
obtaining positive solutions $v_{\mu,q}$ for the Lane-Emden type problem
\begin{equation}
\left\{
\begin{array}
[c]{ll}%
-\Delta_{p}v=\mu\left\vert v\right\vert ^{q-2}v & \text{ \ \ in }\Omega,\\
v=0 & \text{\ \ \ on }\partial\Omega.
\end{array}
\right.  \label{03}%
\end{equation}
After rescaling, $\mu$ and $v_{\mu,q}$ produce a family of pairs $\left\{
\left(  \mu_{q},u_{q}\right)  \right\}  _{1<q<p}$ converging to the first
eigenpair $\left(  \lambda_{p},e_{p}\right)  $ when $q\rightarrow p^{-}$, the
convergence $u_{q}\rightarrow e_{p}$ being in $C^{1}\left(  \overline{\Omega
}\right)  $. We will now describe the method in more detail.

It is well known that for each fixed $\mu>0$, problem (\ref{03}) has a unique
solution $v_{\mu,q}$, if $1<q<p$ (see \cite{Huang}). If $q=p$, we have the
$p$-Laplacian eigenvalue problem. If $q>p$, positive solutions of (\ref{03})
usually are not unique. A nonuniqueness result for ring-shaped domains is
given in \cite{Azorero} when $q$ is close to the Sobolev critical exponent
$p^{\ast}$ ($p^{\ast}=Np/\left(  N-p\right)  $, if $1<p<N$, and $p^{\ast
}=\infty$, if $p\geqslant N$). On the other hand, as proved in \cite{AY},
positive solutions are unique when $\Omega$ is a ball, while for general
bounded domains the uniqueness of positive solutions that reach the minimum
energy (ground states) was established in \cite{Drabek} under the conditions
$1<p<N$ and $1<q<p^{\ast}$.

Now, in order to construct the approximating sequence to the first eigenpair,
first choose any $\mu>0$ and a sequence $\left(  q_{n}\right)  $, $1<q_{n}<p$,
such that $q_{n}\rightarrow p^{-}$. It is important to notice that $\mu$ need
not to be taken close to $\lambda_{p}$. This point is crucial, since good
\textit{a priori} estimates for $\lambda_{p}$ are hard to obtain. For each
$q_{n}$ we need to solve the Lane-Emden problem (\ref{03}) in order to find
$v_{\mu,q_{n}}$, which is a degenerate nonlinear problem almost as hard to
solve as the eigenvalue problem for the $p$-Laplacian (\ref{01}) itself. In
order to obtain the solutions $v_{\mu,q_{n}}$ we first solve the much easier
\textit{torsional creep problem}%
\begin{equation}
\left\{
\begin{array}
[c]{ll}%
-\Delta_{p}\phi=1 & \text{ \ \ in }\Omega,\\
\phi=0 & \text{\ \ \ on }\partial\Omega.
\end{array}
\right.  \label{04}%
\end{equation}
Then compute $k_{p}=\left\Vert \phi\right\Vert _{\infty}^{1-p}$ and set
\begin{equation}
\phi_{0}=\left(  \dfrac{\mu}{k_{p}}\right)  ^{\frac{1}{p-q_{n}}}\dfrac{\phi
}{\Vert\phi\Vert_{\infty}}. \label{phizero}%
\end{equation}
$\phi_{0}$ is a supersolution to (\ref{03}). One immediately sees that the
easiest choice is $\mu=k_{p}$, so that $\phi_{0}=\phi/\Vert\phi\Vert_{\infty}%
$. Now apply an inverse iteration to $\phi_{0}$, finding a sequence of
iterates $\left(  \phi_{m}\right)  $ which satisfy%
\begin{equation}
\left\{
\begin{array}
[c]{ll}%
-\Delta_{p}\phi_{m+1}=\mu\phi_{m}^{q_{n}-1} & \text{ \ \ in }\Omega,\\
\phi_{m+1}=0 & \text{\ \ \ on }\partial\Omega.
\end{array}
\right.  \label{phim}%
\end{equation}
This can be done by a number of numerical methods. Finite volume based methods
are presented in \cite{Andreianov, Droniou}; finite element based methods are
also available (see \cite{HLL} and the references therein). After a
pre-established tolerance limit has been reached at some $\phi_{m}$, where $m$
is a function of $\mu$ and $q_{n}$, set%
\[
v_{\mu,q_{n}}=\phi_{m}%
\]
and define $u_{q_{n}}$ and $\mu_{q_{n}}$ as
\[
\mu_{q_{n}}:=\frac{\mu}{\left\Vert v_{\mu,q_{n}}\right\Vert _{\infty}%
^{p-q_{n}}}\text{ \ and \ }u_{q_{n}}:=\frac{v_{\mu,q_{n}}}{\left\Vert
v_{\mu,q_{n}}\right\Vert _{\infty}}.
\]
In Theorem \ref{convergence} we show that $\mu_{q_{n}}\rightarrow\lambda_{p}$
and $u_{q_{n}}\rightarrow e_{p}$ in $C^{1}\left(  \overline{\Omega}\right)  $
when $q_{n}\rightarrow p^{-}$. Choosing a value for $q$ close to $p$ will give
an approximation for the first eigenpair of the $p$-Laplacian. The procedure
is summarized in Algorithm 1 below.

\begin{algorithm}[!ht]
\caption{Inverse iteration for the first $p$-Laplacian eigenpair $(\lambda_p,e_p)$} \small{
\begin{algorithmic}[1]
\STATE \textbf{set} $\mu$ \qquad\qquad\qquad\qquad\qquad\qquad\qquad\qquad\qquad\qquad\qquad\ \!\! (an arbitrary positive number)
\STATE \textbf{set} $q$ \qquad\qquad\qquad\qquad\qquad\qquad\qquad\qquad\qquad\qquad\qquad\ ($q$ should be chosen close to $p$)
\STATE \textbf{solve} $-\Delta_{p}\phi=1 \text{ in }\Omega,\text{ \ }\phi=0 \text{ on }\partial\Omega$ \qquad\qquad\qquad\qquad\  (torsion function)
\STATE \textbf{set} $\phi_0=\left(  \mu /k_{p}\right)  ^{\frac{1}{p-q}}\phi/\left\Vert \phi\right\Vert_{\infty}$
\qquad\qquad\qquad\qquad\qquad\qquad\ \ (supersolution)
\FOR{$m=0,1,2,\ldots$}
\STATE \textbf{solve} $-\Delta_{p}\phi_{m+1}=\mu\phi_{m}^{q-1} \text{ in }\Omega,\text{ \ }
\phi_{m+1}=0 \text{ on }\partial\Omega$ \qquad(Inverse iteration sequence)
\ENDFOR
\RETURN
$\mu/\left\Vert \phi_{m+1}\right\Vert_{\infty}^{p-q}$ \qquad\qquad\qquad\qquad\qquad\qquad\qquad\!\ \ \ (first eigenvalue $\lambda_p$)
\RETURN
$\phi_{m+1}/\left\Vert \phi_{m+1}\right\Vert_{\infty}$ \qquad\qquad\qquad\qquad\qquad\qquad\qquad (first eigenfunction $e_p$)
\end{algorithmic}
}
\end{algorithm}

However, we are able to produce a much more robust algorithm which is also
easier to apply in practice. In Algorithm 2 below one does not need to use an
arbitrary parameter $\mu$ nor to compute the value of the constant $k_{p}$.
Normalization of the iterates at each step increases robustness and thus it
should be the algorithm of choice.

\begin{algorithm}[!ht]
\caption{Inverse iteration for the first $p$-Laplacian eigenpair $(\lambda_p,e_p)$ with normalization} \small{
\begin{algorithmic}[1]
\STATE \textbf{set} $q$ \qquad\qquad\qquad\qquad\qquad\qquad\qquad\qquad\qquad\qquad\qquad\qquad\qquad\ \ \ ($q$ should be chosen close to $p$)
\STATE \textbf{solve} $-\Delta_{p}\phi_0=1 \text{ in }\Omega,\text{ \ }\phi_0=0 \text{ on }\partial\Omega$ \qquad\qquad\qquad\qquad\qquad\qquad\qquad\!\!\!\!\!\! (torsion function)
\FOR{$m=0,1,2,\ldots$}
\STATE \textbf{solve} $-\Delta_{p}\phi_{m+1}=\left( \phi_m/\left\Vert \phi_m\right\Vert_{\infty} \right)^{q-1} \text{ in }\Omega,\text{ \ }
\phi_{m+1}=0 \text{ on }\partial\Omega$ \qquad(Inverse iteration sequence)
\ENDFOR
\RETURN
$1/\left\Vert \phi_{m+1}\right\Vert_{\infty}^{p-1}$ \qquad\qquad\qquad\qquad\qquad\qquad\qquad\qquad\qquad\qquad\ \!\!\!\! (first eigenvalue $\lambda_p$)
\RETURN
$\phi_{m+1}/\left\Vert \phi_{m+1}\right\Vert_{\infty}$ \qquad\qquad\qquad\qquad\qquad\qquad\qquad\qquad\qquad\ \ (first eigenfunction $e_p$)
\end{algorithmic}
}
\end{algorithm}

The outline of the paper is as follows. In Section 2 we present some
preliminary results that will be used in the sequel. The sequences of
approximates for both algorithms are built in Section 3 and the proof of their
convergence to the first eigenpair is given in Section 4. In Section
\ref{Some numerical results} we present some numerical results for the unit
ball of dimensions $N=2,3,4$ using the first algorithm, and for the
two-dimensional square and the three-dimensional cube and torus using the
second algorithm.

The main advantage of the method presented here, besides its applicability to
general domains, is that approximations to both $\lambda_{p}$ and $e_{p}$ are
obtained with the desired precision by an iterative process which is
numerically simple and, in the case of a ball, also explicit.

\section{Preliminary results\label{preliminaries}}

In this section we state simple versions of some results on the $p$-Laplacian.
We begin with the following comparison principle (see \cite{Damascelli} for a
more general version).

\begin{lemma}
For $i\in\left\{  1,2\right\}  $, let $h_{i}\in C\left(  \overline{\Omega
}\right)  $ and $u_{i}\in W^{1,p}\left(  \Omega\right)  $ be such that
$-\Delta_{p}u_{i}=h_{i}$ in $\Omega.$ If $h_{1}\leq h_{2}$ in $\Omega$ and
$u_{1}\leqslant u_{2}$ on $\partial\Omega,$ then $u_{1}\leqslant u_{2}$ in
$\Omega.$
\end{lemma}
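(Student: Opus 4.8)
The plan is to establish the weak comparison principle for the $p$-Laplacian by testing the equations against the positive part of the difference $u_1 - u_2$. Since $-\Delta_p u_i = h_i$ in the weak sense, we have for every $\varphi \in W_0^{1,p}(\Omega)$ the identities
\[
\int_\Omega \left\vert \nabla u_i \right\vert^{p-2} \nabla u_i \cdot \nabla \varphi \, dx = \int_\Omega h_i \varphi \, dx, \qquad i \in \{1,2\}.
\]
Subtracting these and choosing the test function $\varphi = (u_1 - u_2)^+ := \max\{u_1 - u_2, 0\}$ — which lies in $W_0^{1,p}(\Omega)$ precisely because $u_1 \leqslant u_2$ on $\partial\Omega$, so the positive part vanishes on the boundary — I would obtain
\[
\int_\Omega \left( \left\vert \nabla u_1 \right\vert^{p-2} \nabla u_1 - \left\vert \nabla u_2 \right\vert^{p-2} \nabla u_2 \right) \cdot \nabla (u_1 - u_2)^+ \, dx = \int_\Omega (h_1 - h_2)(u_1 - u_2)^+ \, dx.
\]

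The right-hand side is $\leqslant 0$ since $h_1 \leqslant h_2$ and $(u_1-u_2)^+ \geqslant 0$. For the left-hand side, I would restrict the integral to the set $\{u_1 > u_2\}$, where $\nabla(u_1-u_2)^+ = \nabla u_1 - \nabla u_2$, so the integrand becomes $\left( \left\vert \nabla u_1 \right\vert^{p-2} \nabla u_1 - \left\vert \nabla u_2 \right\vert^{p-2} \nabla u_2 \right) \cdot (\nabla u_1 - \nabla u_2)$. The key elementary fact is the strict monotonicity inequality for the vector field $\xi \mapsto \left\vert \xi \right\vert^{p-2}\xi$ on $\mathbb{R}^N$: there is $c_p > 0$ with
\[
\left( \left\vert \xi \right\vert^{p-2}\xi - \left\vert \eta \right\vert^{p-2}\eta \right) \cdot (\xi - \eta) \geqslant
\begin{cases}
c_p \left\vert \xi - \eta \right\vert^p & \text{if } p \geqslant 2,\\[4pt]
c_p \dfrac{\left\vert \xi - \eta \right\vert^2}{\left( \left\vert \xi \right\vert + \left\vert \eta \right\vert \right)^{2-p}} & \text{if } 1 < p < 2,
\end{cases}
\]
and in either case the quantity is strictly positive whenever $\xi \neq \eta$. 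Hence the left-hand side is $\geqslant 0$, and combining the two bounds forces both sides to vanish. Therefore $\nabla u_1 = \nabla u_2$ a.e. on $\{u_1 > u_2\}$, which means $\nabla (u_1 - u_2)^+ = 0$ a.e. in $\Omega$; since $(u_1-u_2)^+ \in W_0^{1,p}(\Omega)$, the Poincaré inequality gives $(u_1 - u_2)^+ \equiv 0$, i.e. $u_1 \leqslant u_2$ in $\Omega$.

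The main technical point — not really an obstacle, but the step requiring the most care — is the justification that $(u_1 - u_2)^+ \in W_0^{1,p}(\Omega)$ is a legitimate test function. This rests on the boundary hypothesis $u_1 \leqslant u_2$ on $\partial\Omega$ being interpreted as $(u_1 - u_2)^+ \in W_0^{1,p}(\Omega)$ (the standard Sobolev-trace sense), together with the chain rule for the positive part of a $W^{1,p}$ function and the fact that both $u_i$ are merely in $W^{1,p}(\Omega)$, so $u_1 - u_2 \in W^{1,p}(\Omega)$ and its positive part inherits $W^{1,p}$ regularity with $\nabla (u_1-u_2)^+ = \chi_{\{u_1 > u_2\}} \nabla(u_1 - u_2)$. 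I would remark that in every application in this paper one of the $u_i$ is a genuine solution vanishing on $\partial\Omega$ in the classical sense and the other is continuous up to the boundary, so this subtlety is automatically handled; for the general statement it suffices to invoke the cited reference \cite{Damascelli}.
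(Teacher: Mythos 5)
Your proof is correct. Note that the paper itself does not prove this lemma at all: it is quoted as a known comparison principle with a pointer to the reference by Damascelli, so there is no in-paper argument to compare against. What you supply is the standard weak comparison proof — test the difference of the two weak formulations with $\varphi=(u_1-u_2)^+\in W_0^{1,p}(\Omega)$, use $h_1\leqslant h_2$ to make the right-hand side nonpositive, and use the strict monotonicity of $\xi\mapsto\left\vert\xi\right\vert^{p-2}\xi$ (with the usual case split $p\geqslant2$ versus $1<p<2$) to force $\nabla(u_1-u_2)^+=0$ a.e., then conclude by Poincar\'e. This is precisely the argument underlying the cited result, and the one genuinely delicate point — interpreting the boundary inequality in the trace sense so that $(u_1-u_2)^+$ is an admissible test function — you identify and handle correctly; as you observe, in all of the paper's applications both functions are continuous up to $\partial\Omega$ and at least one vanishes there, so the subtlety is harmless. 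Your write-up is a sound, self-contained substitute for the external citation.
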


The following result is a simple version of a general result proved in the
classical paper \cite{Lieberman} of Lieberman.

\begin{theorem}
\textrm{\cite[Thm 1]{Lieberman}} \label{Liebb} Suppose that $u\in
W^{1,p}\left(  \Omega\right)  $ is a weak solution of the Dirichlet problem%
\[
\left\{
\begin{array}
[c]{ll}%
-\Delta_{p}u=f(x,u) & \text{ \ \ in }\Omega,\\
u=0 & \text{\ \ \ on }\partial\Omega
\end{array}
\right.
\]
where $f$ is a continuous function such that%
\[
\left\vert f(x,\xi)\right\vert \leqslant\Lambda\text{ \ for all }(x,\xi
)\in\Omega\times\lbrack-M,M]
\]
for positive constants $\Lambda$ and $M.$

If $\left\Vert u\right\Vert _{\infty}\leqslant M$, then there exists
$0<\alpha<1$, depending only on $\Lambda,$ $p$ and the dimension $N$, such
that $u\in C^{1,\alpha}\left(  \overline{\Omega}\right)  $; moreover we have
\[
\left\Vert u\right\Vert _{C^{1,\alpha}\left(  \overline{\Omega}\right)
}\leqslant C,
\]
where $C$ is a positive constant that depends only on $\Lambda,$ $p,$ $N$ and
$M.$
\end{theorem}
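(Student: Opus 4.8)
The plan is to reduce the semilinear problem to a degenerate equation with a \emph{bounded source independent of} $u$, and then appeal to the $C^{1,\alpha}$ regularity theory for the $p$-Laplacian. Since $u\in W^{1,p}(\Omega)\cap L^{\infty}(\Omega)$ with $\left\Vert u\right\Vert_{\infty}\leqslant M$, the function $g(x):=f(x,u(x))$ is measurable (because $f$ is continuous, hence a Carath\'eodory function) and satisfies $\left\Vert g\right\Vert_{\infty}\leqslant\Lambda$. Thus $u$ is a weak solution of
\[
-\Delta_{p}u=g\quad\text{in }\Omega,\qquad u=0\quad\text{on }\partial\Omega,
\]
with $g\in L^{\infty}(\Omega)$, and it suffices to establish the asserted estimate for this problem with a constant depending only on $\left\Vert g\right\Vert_{\infty}\leqslant\Lambda$, $p$, $N$ and the (smooth) domain $\Omega$.

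For the interior part I would invoke the by now classical results of DiBenedetto and Tolksdorf: a weak solution of $-\Delta_{p}u=g$ with $g\in L^{\infty}_{\mathrm{loc}}$ lies in $C^{1,\alpha}_{\mathrm{loc}}(\Omega)$ for some $\alpha=\alpha(p,N)\in(0,1)$, with $\left\Vert u\right\Vert_{C^{1,\alpha}(B_{r})}$ controlled in terms of $p$, $N$, $\left\Vert g\right\Vert_{\infty}$, $r$ and $\left\Vert u\right\Vert_{L^{p}(B_{2r})}$; the last quantity is in turn bounded by $M$ and $\left\vert\Omega\right\vert$. The underlying mechanism is a De~Giorgi--Nash--Moser iteration giving local boundedness of $\nabla u$, followed by a perturbation/oscillation comparison of $u$ with solutions of the homogeneous equation.

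The genuinely delicate ingredient, and the one I expect to be the main obstacle, is the \emph{boundary} regularity: showing that $\nabla u$ is bounded and H\"older continuous \emph{up to} $\partial\Omega$. Here one exploits the smoothness of $\partial\Omega$ to flatten the boundary locally by a diffeomorphism; the transformed equation is again of $p$-Laplacian type, but now with variable, H\"older-continuous coefficients and a bounded source. One then constructs explicit barriers in a half-ball to obtain the gradient bound $\left\vert\nabla u\right\vert\leqslant C$ in a neighbourhood of the flattened boundary, and finally runs a Campanato-type oscillation-decay argument for $\nabla u$ to upgrade this to a uniform $C^{1,\alpha}$ estimate near the boundary. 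This is precisely the content of Lieberman's boundary estimates.

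Patching the finitely many boundary neighbourhoods together with the interior estimate by a covering/compactness argument yields $u\in C^{1,\alpha}(\overline{\Omega})$ and $\left\Vert u\right\Vert_{C^{1,\alpha}(\overline{\Omega})}\leqslant C(\Lambda,p,N,M)$, which is the assertion. (The dependence on $M$ is in fact inessential, since the weak maximum principle of the first lemma gives $\left\Vert u\right\Vert_{\infty}\leqslant C(p,N,\Omega)\,\Lambda^{1/(p-1)}$; we retain $M$ only to match the statement.) Since this entire argument is exactly Theorem~1 of \cite{Lieberman} applied with the source $f(x,u(x))$, no new reasoning beyond the above reduction is needed.
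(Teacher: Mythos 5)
Your reduction---freezing $u$ in the nonlinearity so that $g(x):=f(x,u(x))$ is a bounded source with $\left\Vert g\right\Vert_{\infty}\leqslant\Lambda$, and then invoking the interior (DiBenedetto--Tolksdorf) and global boundary (Lieberman) $C^{1,\alpha}$ theory---is exactly how this statement is meant to be read, and the paper itself gives no independent proof but simply cites it as a special case of Theorem~1 of \cite{Lieberman}. So your proposal is correct and follows essentially the same route as the paper; the only caveat worth recording is that the constants $\alpha$ and $C$ also depend on the (smooth, bounded) domain $\Omega$, a dependence implicit in the paper's statement and in your sketch.
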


Thus, denoting by $\phi$ is the solution of the torsional creep problem
(\ref{04}) in the domain $\Omega$, one can easily verify using (\ref{Rtorsion}%
) and the comparison principle in balls that $0<\phi\leqslant M$ \ in $\Omega$
for some positive constant $M$. Hence, Theorem \ref{Liebb} implies that
$\phi\in C^{1,\alpha}\left(  \overline{\Omega}\right)  $ for some $0<\alpha<1$.

For the next lemma set
\begin{equation}
k_{p}:=\left\Vert \phi\right\Vert _{\infty}^{1-p}>0. \label{05}%
\end{equation}

\begin{lemma}
\label{lbound}$k_{p}\leqslant\lambda_{p}.$
\end{lemma}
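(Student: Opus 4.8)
The plan is to compare the torsion function $\phi$ with the first eigenfunction $e_p$ (normalized so that $\Vert e_p\Vert_\infty=1$) using the comparison principle, Lemma 2. Observe that $e_p$ solves $-\Delta_p e_p=\lambda_p e_p^{p-1}$ in $\Omega$ with $e_p=0$ on $\partial\Omega$, and since $0\leqslant e_p\leqslant 1$ we have $\lambda_p e_p^{p-1}\leqslant \lambda_p$ in $\Omega$. Consider the scaled torsion function $\psi:=\lambda_p^{1/(p-1)}\phi$, which satisfies $-\Delta_p\psi=\lambda_p$ in $\Omega$ (using the $(p-1)$-homogeneity of $\Delta_p$) and $\psi=0$ on $\partial\Omega$. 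Then $-\Delta_p e_p=\lambda_p e_p^{p-1}\leqslant \lambda_p=-\Delta_p\psi$ in $\Omega$ and $e_p=\psi=0$ on $\partial\Omega$, so Lemma 2 gives $e_p\leqslant\psi=\lambda_p^{1/(p-1)}\phi$ in $\Omega$.

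Taking $L^\infty$-norms in the inequality $e_p\leqslant\lambda_p^{1/(p-1)}\phi$ yields $1=\Vert e_p\Vert_\infty\leqslant\lambda_p^{1/(p-1)}\Vert\phi\Vert_\infty$, hence $\Vert\phi\Vert_\infty^{\,p-1}\geqslant\lambda_p^{-1}$, i.e. $\Vert\phi\Vert_\infty^{\,1-p}\leqslant\lambda_p$. By the definition $k_p=\Vert\phi\Vert_\infty^{1-p}$ in (\ref{05}), this is exactly $k_p\leqslant\lambda_p$.

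I do not anticipate a serious obstacle here; the only point requiring mild care is the regularity needed to invoke the comparison principle, but this is already handled in the excerpt, since both $\phi$ (hence $\psi$) and $e_p$ lie in $C^{1,\alpha}(\overline\Omega)\subset W^{1,p}(\Omega)$, and the right-hand sides $\lambda_p$ and $\lambda_p e_p^{p-1}$ are continuous on $\overline\Omega$, so the hypotheses of Lemma 2 are met. One should also note that the argument does not require uniqueness of $e_p$ beyond the standard facts quoted in the introduction (simplicity of $\lambda_p$ and positivity of $e_p$), and the scaling step uses only that $\Delta_p(t\,v)=t^{p-1}\Delta_p v$ for $t>0$.
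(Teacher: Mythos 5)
Your argument is correct and is essentially identical to the paper's own proof: compare $e_p$ with $\lambda_p^{1/(p-1)}\phi$ via the comparison principle, then take $L^\infty$-norms to conclude $k_p\leqslant\lambda_p$. No issues.
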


\begin{proof}
Let $e_{p}$ be the first eigenfuncion associated with $\lambda_{p}$ satisfying
$\left\Vert e_{p}\right\Vert _{\infty}=1$ in $\Omega.$ Since
\[
\left\{
\begin{array}
[c]{ll}%
-\Delta_{p}e_{p}=\lambda_{p}e_{p}^{p-1}\leqslant\lambda_{p}=-\Delta_{p}\left(
\lambda_{p}^{\frac{1}{p-1}}\phi\right)  & \text{ \ \ in }\Omega,\\
e_{p}=0=\lambda_{p}^{\frac{1}{p-1}}\phi & \text{\ \ \ on }\partial\Omega,
\end{array}
\right.
\]
it follows from the comparison principle that
\[
0<e_{p}\leqslant\lambda_{p}^{\frac{1}{p-1}}\phi\text{\ \ \ in }\Omega.
\]
Hence,%
\[
1=\left\Vert e_{p}\right\Vert _{\infty}\leqslant\lambda_{p}^{\frac{1}{p-1}%
}\left\Vert \phi\right\Vert _{\infty},
\]
from what follows our claim.\vspace{0.5cm}
\end{proof}

\begin{remark}
It follows from Picone's identity (see \cite{AH}) that, in fact, the
inequality is strict, that is, $k_{p}<\lambda_{p}$ (for details, see
\cite[Lemma 8.1]{BEZ}).
\end{remark}

The following result is well-known and follows from Theorem \ref{Liebb}.

\begin{theorem}
\label{compactness}Let $-\Delta_{p}^{-1}:C^{1}\left(  \overline{\Omega
}\right)  \rightarrow W_{0}^{1,p}\left(  \Omega\right)  $ be the operator
defined as follows: for each $v\in C^{1}\left(  \overline{\Omega}\right)  $
let $-\Delta_{p}^{-1}v:=u\in W_{0}^{1,p}\left(  \Omega\right)  $ be the unique
solution of the Dirichlet problem
\[
\left\{
\begin{array}
[c]{ll}%
-\Delta_{p}u=v & \text{ \ \ in }\Omega,\\
u=0 & \text{\ \ \ on }\partial\Omega.
\end{array}
\right.
\]
Then $-\Delta_{p}^{-1}$ is continuous and compact. Moreover, $-\Delta_{p}%
^{-1}v\in C^{1,\alpha}\left(  \overline{\Omega}\right)  $ for each $v\in
C^{1}\left(  \overline{\Omega}\right)  .$
\end{theorem}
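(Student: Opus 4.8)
The plan is to establish the theorem in four stages: well-posedness of the map, an a priori $L^{\infty}$-bound, the $C^{1,\alpha}$-regularity via Lieberman's estimate, and finally continuity and compactness via Arzel\`a--Ascoli. First I would check that $-\Delta_{p}^{-1}$ is well-defined: existence of a weak solution $u\in W_{0}^{1,p}(\Omega)$ of $-\Delta_{p}u=v$ follows by minimizing the strictly convex, coercive, weakly lower semicontinuous functional $J(w)=\frac{1}{p}\int_{\Omega}|\nabla w|^{p}\,dx-\int_{\Omega}vw\,dx$ over $W_{0}^{1,p}(\Omega)$ (coercivity coming from the Poincar\'e inequality), while uniqueness follows at once from the comparison principle of Section~\ref{preliminaries} applied with $h_{1}=h_{2}=v$ and equal boundary data.

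Next, the a priori bound. If $\|v\|_{\infty}\leqslant\Lambda$, then, writing $c=\Lambda^{1/(p-1)}$ and using $-\Delta_{p}(\pm c\phi)=\pm c^{p-1}=\pm\Lambda$ together with $-\Lambda\leqslant v\leqslant\Lambda$, the comparison principle gives $-c\phi\leqslant u\leqslant c\phi$ in $\Omega$, hence $\|u\|_{\infty}\leqslant M:=\Lambda^{1/(p-1)}\|\phi\|_{\infty}$, a bound depending only on $\Lambda$, $p$, and $\Omega$. Then Theorem~\ref{Liebb}, applied with $f(x,\xi):=v(x)$ (continuous, bounded by $\Lambda$) and this $M$, yields $u\in C^{1,\alpha}(\overline{\Omega})$ together with $\|u\|_{C^{1,\alpha}(\overline{\Omega})}\leqslant C(\Lambda,p,N,M)$. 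This already settles the last assertion of the theorem, that $-\Delta_{p}^{-1}v\in C^{1,\alpha}(\overline{\Omega})$.

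For compactness, let $(v_{n})$ be bounded in $C^{1}(\overline{\Omega})$, say $\|v_{n}\|_{C^{1}}\leqslant R$, and set $u_{n}:=-\Delta_{p}^{-1}v_{n}$. By the previous step the $u_{n}$ are uniformly bounded in $C^{1,\alpha}(\overline{\Omega})$, so by Arzel\`a--Ascoli (the compact embedding of $C^{1,\alpha}(\overline{\Omega})$ into $C^{1}(\overline{\Omega})$) some subsequence converges in $C^{1}(\overline{\Omega})$; since uniform convergence of the gradients on the bounded set $\Omega$ forces convergence in $W^{1,p}(\Omega)$, and $W_{0}^{1,p}(\Omega)$ is closed, the subsequence converges in $W_{0}^{1,p}(\Omega)$, which is compactness. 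For continuity, suppose in addition $v_{n}\to v$ in $C^{1}(\overline{\Omega})$; along any $C^{1}$-convergent subsequence $u_{n_{k}}\to w$ one may pass to the limit in the weak formulation, because $\xi\mapsto|\xi|^{p-2}\xi$ is continuous and the convergences $\nabla u_{n_{k}}\to\nabla w$ and $v_{n_{k}}\to v$ are uniform, so $w$ solves $-\Delta_{p}w=v$ with zero boundary data; by the uniqueness already established, $w=-\Delta_{p}^{-1}v$. As every subsequence has a further subsequence converging to this same limit, the full sequence satisfies $u_{n}\to -\Delta_{p}^{-1}v$ in $C^{1}(\overline{\Omega})$, hence in $W_{0}^{1,p}(\Omega)$.

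The only delicate points are obtaining the $L^{\infty}$-bound without circularity and making the passage to the limit in the weak formulation clean; both reduce to the comparison principle and the continuity of $\xi\mapsto|\xi|^{p-2}\xi$, so no genuine difficulty is expected --- the substance of the theorem is carried entirely by the $C^{1,\alpha}$ estimate of Theorem~\ref{Liebb}.
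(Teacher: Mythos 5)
Your proof is correct and follows exactly the route the paper intends: the paper states this result without proof, remarking only that it is ``well-known and follows from Theorem \ref{Liebb}'', and your argument is precisely that --- an $L^{\infty}$ bound by comparison with a multiple of the torsion function, the uniform $C^{1,\alpha}$ estimate of Theorem \ref{Liebb}, Arzel\`a--Ascoli for compactness, and passage to the limit in the weak formulation plus uniqueness for continuity. Nothing further is needed; your write-up simply supplies the details the paper leaves to the reader.
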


In the remainder of the paper $\left(  \lambda_{p},e_{p}\right)  $ denotes the
first eigenpair of (\ref{01}), $\phi$ denotes the torsion function of $\Omega$
and $k_{p}:=\left\Vert \phi\right\Vert _{\infty}^{1-p}.$

\section{Construction of the sequence of
approximates\label{The iterative method}}

As mentioned before, if $1<q<p$, then for each $\mu>0$ the Lane-Emden problem%
\begin{equation}
\left\{
\begin{array}
[c]{ll}%
-\Delta_{p}v=\mu\left\vert v\right\vert ^{q-2}v & \text{ \ \ in }\Omega,\\
v=0 & \text{\ \ \ on }\partial\Omega,
\end{array}
\right.  \label{Laneq}%
\end{equation}
has a unique positive solution $v_{\mu,q}$, which can be obtained via standard
variational, and therefore \textit{non-constructive}, arguments. The existence
and uniqueness of solutions of (\ref{Laneq}) in the case $1<q<p$ implies that
the map $\mu\mapsto v_{\mu,q}$ is well-defined and monotone, in the sense that
$\mu_{1}<\mu_{2}$ implies $v_{\mu_{1},q}<v_{\mu_{2},q}$ in $\Omega$, since
$v_{\mu_{1},q}=\left(  \mu_{1}/\mu_{2}\right)  ^{1/\left(  p-q\right)  }%
v_{\mu_{2},q}$ for any $\mu_{1},$ $\mu_{2}>0$.

The basis of our constructive method is given by

\begin{theorem}
\label{muq}Suppose $1<q<p.$ For each $\mu>0$ the unique positive solution
$v_{\mu,q}\in C^{1,\alpha}\left(  \overline{\Omega}\right)  \cap W_{0}%
^{1,p}\left(  \Omega\right)  $ of $(\ref{Laneq})$ satisfies
\begin{equation}
0<\left(  \frac{\mu}{\lambda_{p}}\right)  ^{\frac{1}{p-q}}e_{p}\leqslant
v_{\mu,q}\leqslant\left(  \frac{\mu}{k_{p}}\right)  ^{\frac{1}{p-q}}\frac
{\phi}{\left\Vert \phi\right\Vert _{\infty}}\text{ \ \ in }\Omega. \label{06}%
\end{equation}

Moreover, $v_{\mu,q}\ $ is the limit, in the $C^{1}\left(  \overline{\Omega
}\right)  $ norm, of the sequence $\left\{  v_{n}\right\}  \subset
C^{1,\alpha}\left(  \overline{\Omega}\right)  \cap W_{0}^{1,p}\left(
\Omega\right)  $ iteratively defined by%

\begin{equation}
v_{1}:=\left(  \frac{\mu}{k_{p}}\right)  ^{\frac{1}{p-q}}\frac{\phi
}{\left\Vert \phi\right\Vert _{\infty}} \label{v0}%
\end{equation}
and, for $n\geqslant1$,
\begin{equation}
\left\{
\begin{array}
[c]{ll}%
-\Delta_{p}v_{n+1}=\mu v_{n}^{q-1} & \text{ \ \ in }\Omega,\\
v_{n+1}=0 & \text{\ \ \ on }\partial\Omega.
\end{array}
\right.  \label{iterativeq}%
\end{equation}

\end{theorem}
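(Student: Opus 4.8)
The plan is to establish the two-sided bound \eqref{06} first, then use it to drive a monotone iteration converging in $C^1$. For the upper bound, recall from \eqref{phizero}--\eqref{v0} that $v_1 = (\mu/k_p)^{1/(p-q)}\phi/\|\phi\|_\infty$ satisfies $-\Delta_p v_1 = (\mu/k_p) k_p (\phi/\|\phi\|_\infty)^{p-1}\cdot\|\phi\|_\infty^{?}$; more precisely a direct computation using $-\Delta_p\phi=1$ and $k_p=\|\phi\|_\infty^{1-p}$ gives $-\Delta_p v_1 = \mu\|\phi\|_\infty^{q-p}(\phi/\|\phi\|_\infty)^{?}$, and since $\phi/\|\phi\|_\infty\leqslant 1$ one checks $-\Delta_p v_1 \geqslant \mu v_1^{q-1}$, i.e. $v_1$ is a supersolution of \eqref{Laneq}. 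For the lower bound, set $w := (\mu/\lambda_p)^{1/(p-q)}e_p$; using $-\Delta_p e_p=\lambda_p e_p^{p-1}$ and $\|e_p\|_\infty=1$ one computes $-\Delta_p w = \mu (\mu/\lambda_p)^{(p-q)/(p-q)-1}\lambda_p^{?}e_p^{p-1}$, and because $e_p\leqslant 1$ and $q-1<p-1$, this yields $-\Delta_p w \leqslant \mu w^{q-1}$, so $w$ is a subsolution. Both inequalities are then made rigorous by the comparison principle (the first Lemma of Section~\ref{preliminaries}): comparing $w$ with $v_{\mu,q}$ and $v_{\mu,q}$ with $v_1$, both pairs agreeing (as zero) on $\partial\Omega$, gives \eqref{06}.

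Next I would set up the iteration \eqref{iterativeq}. Starting from the supersolution $v_1$, I claim the sequence is nonincreasing and bounded below by $w$. Indeed, since $-\Delta_p v_2 = \mu v_1^{q-1} \leqslant -\Delta_p v_1$ (using that $v_1$ is a supersolution) and $v_1=v_2=0$ on $\partial\Omega$, the comparison principle gives $v_2\leqslant v_1$; similarly $-\Delta_p v_2 = \mu v_1^{q-1}\geqslant \mu w^{q-1}\geqslant -\Delta_p w$ (using $v_1\geqslant w$ from \eqref{06} and that $w$ is a subsolution, i.e. $-\Delta_p w\leqslant\mu w^{q-1}$), so $v_2\geqslant w$. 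Inductively, if $w\leqslant v_{n+1}\leqslant v_n$ then applying $-\Delta_p^{-1}$ (monotone by the comparison principle) to $\mu v_{n+1}^{q-1}\leqslant\mu v_n^{q-1}$ gives $v_{n+2}\leqslant v_{n+1}$, and to $\mu v_{n+1}^{q-1}\geqslant\mu w^{q-1}\geqslant -\Delta_p w$ gives $v_{n+2}\geqslant w$. Hence $\{v_n\}$ decreases pointwise to some limit $v_\infty$ with $w\leqslant v_\infty\leqslant v_1$.

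To upgrade to $C^1$ convergence and identify the limit, I would invoke the regularity and compactness machinery already available. Since $0\leqslant v_n\leqslant v_1$, with $M:=\|v_1\|_\infty$ a uniform bound, the right-hand sides $\mu v_n^{q-1}$ are uniformly bounded by $\Lambda:=\mu M^{q-1}$; Theorem~\ref{Liebb} then gives a uniform bound $\|v_{n+1}\|_{C^{1,\alpha}(\overline\Omega)}\leqslant C$ with $\alpha,C$ independent of $n$. By Arzelà--Ascoli the sequence is precompact in $C^1(\overline\Omega)$, and since it converges pointwise to $v_\infty$, it converges to $v_\infty$ in $C^1(\overline\Omega)$ (every subsequence has a further subsequence converging in $C^1$ to the pointwise limit). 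In particular $v_n^{q-1}\to v_\infty^{q-1}$ in $C(\overline\Omega)$, and the continuity of $-\Delta_p^{-1}$ from Theorem~\ref{compactness} lets us pass to the limit in \eqref{iterativeq}: $v_\infty = -\Delta_p^{-1}(\mu v_\infty^{q-1})$, i.e. $v_\infty$ is a nonnegative solution of \eqref{Laneq}, and it is positive since $v_\infty\geqslant w>0$. By the uniqueness of the positive solution (cited from \cite{Huang}), $v_\infty = v_{\mu,q}$, which also retroactively confirms the first two inequalities in \eqref{06} via $w\leqslant v_\infty=v_{\mu,q}\leqslant v_1$.

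The main obstacle I anticipate is none of the individual steps but rather the bookkeeping of exponents in verifying the sub/supersolution inequalities: one must be careful that the scaling factors $(\mu/k_p)^{1/(p-q)}$ and $(\mu/\lambda_p)^{1/(p-q)}$ interact correctly with the nonlinearity $v\mapsto v^{q-1}$ versus the $(p-1)$-homogeneity of $-\Delta_p$, and that the inequalities $\phi/\|\phi\|_\infty\leqslant 1$ and $e_p\leqslant 1$ are used in the direction consistent with $q-1<p-1$. Once those computations are pinned down, the comparison principle, Lieberman's estimate, and compactness of $-\Delta_p^{-1}$ do the rest mechanically.
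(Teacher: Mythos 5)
Your overall route is the paper's route: take $\overline v:=(\mu/k_p)^{1/(p-q)}\phi/\|\phi\|_\infty$ as a supersolution and $w:=(\mu/\lambda_p)^{1/(p-q)}e_p$ as a subsolution, run the inverse iteration from the supersolution, get monotonicity from the comparison lemma, uniform $C^{1,\alpha}$ bounds from Theorem \ref{Liebb}, $C^1$ convergence from Arzel\`a--Ascoli, pass to the limit with the continuity of $-\Delta_p^{-1}$, and identify the limit with $v_{\mu,q}$ by uniqueness of the positive solution. But there is a genuine gap in how you obtain the ordering $w\leqslant v_1$, and it is load-bearing. Your induction bounding the iterates below by $w$ explicitly uses $v_1\geqslant w$, which you source ``from \eqref{06}''; yet \eqref{06} is the statement being proved, and your own plan only recovers it ``retroactively'' from that same iteration --- this is circular. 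Nor does your paragraph-one device repair it: the comparison lemma of Section \ref{preliminaries} compares solutions of $-\Delta_p u_i=h_i$ with \emph{ordered, known} right-hand sides, so to compare $w$ (or $v_1$) directly with $v_{\mu,q}$ you would first need $\mu w^{q-1}\leqslant\mu v_{\mu,q}^{q-1}$ (resp.\ $\mu v_{\mu,q}^{q-1}\leqslant\mu M^{q-1}$, $M:=(\mu/k_p)^{1/(p-q)}$), i.e.\ exactly the conclusion. The paper closes this with a step you never invoke: Lemma \ref{lbound}, $k_p\leqslant\lambda_p$, which gives $\lambda_p m^{p-1}\leqslant k_pM^{p-1}$ (with $m:=(\mu/\lambda_p)^{1/(p-q)}$), hence $-\Delta_p w=\lambda_p m^{p-1}e_p^{p-1}\leqslant\lambda_p m^{p-1}\leqslant k_pM^{p-1}=-\Delta_p v_1$ with both right-hand sides known functions, and only then does the comparison lemma yield $w\leqslant v_1$. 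The lower bound is not cosmetic: without it the decreasing iterates could a priori collapse to the trivial solution $v\equiv 0$, and the appeal to uniqueness of the \emph{positive} solution, as well as the left inequality in \eqref{06}, would fail.

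A secondary issue is that the sub/supersolution inequalities themselves are left as placeholders (your ``?'' exponents). They do close, and in the direction you guessed: $-\Delta_p v_1=k_pM^{p-1}=\mu M^{q-1}\geqslant\mu v_1^{q-1}$ since $\phi/\|\phi\|_\infty\leqslant 1$, and $-\Delta_p w=\lambda_p(me_p)^{p-q}(me_p)^{q-1}\leqslant\lambda_p m^{p-q}(me_p)^{q-1}=\mu w^{q-1}$ since $\|e_p\|_\infty=1$ and $p>q$; these one-line computations are the quantitative heart of the theorem and must be written out. With the ordering supplied via Lemma \ref{lbound} and these verifications in place, the remainder of your argument (monotone scheme, uniform Lieberman bound with $\Lambda=\mu M^{q-1}$, Arzel\`a--Ascoli, limit passage, positivity from $v_\infty\geqslant w>0$, uniqueness) coincides with the paper's proof.
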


\begin{proof}
Define $\underline{v}_{\mu,q}:=me_{p}$ and $\overline{v}_{\mu,q}:=\dfrac
{M\phi}{\left\Vert \phi\right\Vert _{\infty}}$ where
\[
m:=\left(  \dfrac{\mu}{\lambda_{p}}\right)  ^{\frac{1}{p-q}}\text{ \ \ and
\ \ }M:=\left(  \dfrac{\mu}{k_{p}}\right)  ^{\frac{1}{p-q}}.
\]
We have
\begin{equation}
-\Delta_{p}\underline{v}_{\mu,q}\leqslant\mu\underline{v}_{\mu,q}^{q-1}\text{
\ \ and \ \ }-\Delta_{p}\overline{v}_{\mu,q}\geqslant\mu\overline{v}_{\mu
,q}^{q-1}\text{ \ \ in }\Omega. \label{subsup}%
\end{equation}
Indeed, in $\Omega$ in we have
\[
-\Delta_{p}\underline{v}_{\mu,q}=\lambda_{p}\underline{v}_{\mu,q}%
^{p-1}=\lambda_{p}\underline{v}_{\mu,q}^{p-q}\underline{v}_{\mu,q}%
^{q-1}=\lambda_{p}\left(  me_{p}\right)  ^{p-q}\underline{v}_{\mu,q}%
^{q-1}\leqslant\lambda_{p}m^{p-q}\underline{v}_{\mu,q}^{q-1}=\mu\underline
{v}_{\mu,q}^{q-1}%
\]
and%
\[
-\Delta_{p}\overline{v}_{\mu,q}=k_{p}M^{p-1}=k_{p}M^{p-q}M^{q-1}\geqslant
k_{p}M^{p-q}\left(  \frac{M\phi}{\left\Vert \phi\right\Vert _{\infty}}\right)
^{q-1}=\mu\overline{v}_{\mu,q}^{q-1}.
\]
Since $\underline{v}_{\mu,q}=0=\overline{v}_{\mu,q}$ on $\partial\Omega$ the
inequalities in (\ref{subsup}) mean that $\underline{v}_{\mu,q}$ and
$\overline{v}_{\mu,q}$ are, respectively, sub- and supersolutions for
(\ref{Laneq}).

Moreover, $\underline{v}_{\mu,q}$ and $\overline{v}_{\mu,q}$ are ordered, that
is $\underline{v}_{\mu,q}\leqslant\overline{v}_{\mu,q}$ in $\Omega$. For,
since $k_{p}\leqslant\lambda_{p}$, we have%
\begin{align*}
\lambda_{p}m^{p-1}  &  =\lambda_{p}\left(  \frac{\mu}{\lambda_{p}}\right)
^{\frac{p-1}{p-q}}\\
&  =\mu^{\frac{p-1}{p-q}}\left(  \frac{1}{\lambda_{p}}\right)  ^{\frac
{q-1}{p-q}}\leqslant\mu^{\frac{p-1}{p-q}}\left(  \frac{1}{k_{p}}\right)
^{\frac{q-1}{p-q}}=k_{p}\left(  \frac{\mu}{k_{p}}\right)  ^{\frac{p-1}{p-q}%
}=k_{p}M^{p-1},
\end{align*}
whence
\[
-\Delta_{p}\underline{v}_{\mu,q}=\lambda_{p}\underline{v}_{\mu,q}%
^{p-1}\leqslant\lambda_{p}m^{p-1}\leqslant k_{p}M^{p-1}=-\Delta_{p}%
\overline{v}_{\mu,q}%
\]
in $\Omega$. Thus, since $\underline{v}_{\mu,q}=\overline{v}_{\mu,q}=0$ on
$\partial\Omega,$ we obtain $\underline{v}_{\mu,q}\leqslant\overline{v}%
_{\mu,q}$ in $\Omega$ by applying the comparison principle.

Since $u\mapsto\mu u^{q-1}$ is increasing and $\underline{v}_{\mu,q}%
\leqslant\overline{v}_{\mu,q}$ in $\Omega$, the comparison principle also
implies that the sequence $\left\{  v_{n}\right\}  $ defined by the iterative
process (\ref{iterativeq}) starting with the supersolution $\overline{v}%
_{\mu,q}$ satisfies
\[
\underline{v}_{\mu,q}\leqslant v_{n+1}\leqslant v_{n}\leqslant\overline
{v}_{\mu,q}\text{ \ in }\Omega.
\]

Hence, $v_{n}$ converges to a function $v_{\mu,q}$ a.e. in $\Omega$. Since
$\left\Vert v_{n}\right\Vert _{\infty}\leqslant\left\Vert \overline{v}_{\mu
,q}\right\Vert _{\infty}=M$, it follows from Theorem \ref{Liebb} that
$\left\{  v_{n}\right\}  \subset C^{1,\alpha}\left(  \overline{\Omega}\right)
$ for some $0<\alpha<1$ (which does not depend on $n$) and that
\[
\left\Vert v_{n}\right\Vert _{C^{1,\alpha}\left(  \overline{\Omega}\right)
}\leqslant C
\]
for some positive constant $C$ which is independent of $n.$

Thus, from Arzela-Ascoli theorem we conclude that $v_{n}\rightarrow v$ in the
$C^{1}$ norm.

Now, the continuity of the operator $-\Delta_{p}^{-1}:$ $C^{1}\left(
\overline{\Omega}\right)  \rightarrow W_{0}^{1,p}\left(  \Omega\right)  $
permits passing to the limit in (\ref{iterativeq}), which yields that
$v_{\mu,q}\in C^{1}\left(  \overline{\Omega}\right)  \cap W_{0}^{1,p}\left(
\Omega\right)  $ is a solution of (\ref{Laneq}) satisfying
\[
0<\underline{v}_{\mu,q}\leqslant v_{\mu,q}\leqslant\overline{v}_{\mu,q}\text{
\ in }\Omega,
\]
proving (\ref{06}). The regularity $v_{\mu,q}\in C^{1,\alpha}\left(
\overline{\Omega}\right)  $ follows from Theorem \ref{Liebb}.
\end{proof}

This iterative process is also known as inverse iteration since $v_{n+1}%
=-\Delta_{p}^{-1} (\mu v^{q-1}_{n}).$ It is essentially the sub- and
supersolution method starting with the supersolution $\overline{v}_{\mu,q}$;
the solution $v_{\mu,q}$ that it produces is characterized as the maximal
solution between $\underline{v}_{\mu,q}\ $and $\overline{v}_{\mu,q}.$

If one starts the iteration with the subsolution then one obtains an
increasing sequence converging to the minimal solution between $\underline
{v}_{\mu,q}\ $and $\overline{v}_{\mu,q}.$ Because of the uniqueness this
minimal solution coincides with $v_{\mu,q}.$ However, in order to compute the
minimal solution from this iterative process, it is necessary to know
\textit{a priori} a subsolution, which is exactly one of the unknowns that we
wish to find by applying the method.

On the other hand the supersolution $\overline{v}_{\mu,q}$ is easily
obtainable since it involves the solution of the simpler problem (\ref{04}).

For example, if $\Omega=B_{R}(x_{0})$, the ball centered at $x_{0}%
\in\mathbb{R}^{N}$ with radius $R>0$, then it is easy to verify (see also
below) that the \textit{torsion function} $\phi$ is the radial function
\begin{equation}
\phi\left(  r\right)  =\frac{p-1}{pN^{\frac{1}{p-1}}}\left(  R^{\frac{p}{p-1}%
}-\left\vert r\right\vert ^{\frac{p}{p-1}}\right)  ,\text{ \ }r=\left\vert
x-x_{0}\right\vert \leqslant R.\label{Rtorsion}%
\end{equation}
We then obtain that%
\begin{equation}
k_{p}=\left\Vert \phi\right\Vert _{\infty}^{1-p}=\frac{N}{R^{p}}\left(
\frac{p}{p-1}\right)  ^{p-1}\label{kpball}%
\end{equation}
and%
\[
\overline{v}_{\mu,q}\left(  r\right)  =\left(  \frac{\mu}{k_{p}}\right)
^{\frac{1}{p-q}}\left(  1-\left\vert r\right\vert ^{\frac{p}{p-1}}\right)
=\mu^{\frac{1}{p-q}}\left(  \frac{p-1}{pN^{\frac{1}{p-1}}}\right)
^{\frac{p-1}{p-q}}\left(  1-\left\vert r\right\vert ^{\frac{p}{p-1}}\right)
\]
where $r=\left\vert x-x_{0}\right\vert .$

In this case the sequence $v_{n}$ converging to $v_{\mu,q}$ is given
recursively by the formula%
\begin{equation}
v_{n+1}\left(  r\right)  =\int_{r}^{R}\left(  \int_{0}^{\theta}\left(
\frac{s}{\theta}\right)  ^{N-1}\mu v_{n}(s)^{q-1}ds\right)  ^{\frac{1}{p-1}%
}d\theta\label{vnradial}%
\end{equation}
where $v_{0}\left(  r\right)  =\overline{v}_{\mu,q}\left(  r\right)  $.

This integral formula follows from the more general fact: the Poisson problem%
\[
\left\{
\begin{array}
[c]{l}%
-\Delta_{p}u=f(|x|)\text{ \ in \ }B_{R}(x_{0})\\
u=0\text{ \ on \ }\partial B_{R}(x_{0})
\end{array}
\right.
\]
is equivalent to the ODE boundary value problem%
\[
\left\{
\begin{array}
[c]{l}%
-\left(  r^{N-1}\left\vert u^{\prime}\right\vert ^{p-2}u^{\prime}\right)
^{\prime}=r^{N-1}f(r),\text{ }0<r<R\\
u^{\prime}(0)=0=u(R)
\end{array}
\right.
\]
for radial solutions $u=u(r),$ $r=\left\vert x-x_{0}\right\vert .$ Hence,
after two integrations of the ODE taking into account the boundary conditions
one obtains the following integral expression%
\begin{equation}
u(r)=\int_{r}^{R}\left(  \int_{0}^{\theta}(\frac{s}{\theta})^{N-1}%
f(s)ds\right)  ^{\frac{1}{p-1}}d\theta\label{greenball}%
\end{equation}
for the solution $u=u(\left\vert x-x_{0}\right\vert )$ of the Poisson problem.
In particular, when $f(r)\equiv1\ $this integral form might be simplified in
order to find the expression (\ref{Rtorsion}) for the torsion function of
$B_{R}.$

In our method, in order to compute the first eigenpair $\left(  \lambda
_{p},e_{p}\right)  $, we fix a positive value $\mu>0$ and choose $q$ close to
$p^{-}.$ Then, we apply the inverse iteration of Theorem \ref{muq} starting
with the supersolution%
\[
\overline{v}_{\mu,q}=\left(  \frac{\mu}{k_{p}}\right)  ^{\frac{1}{p-q}}%
\frac{\phi}{\left\Vert \phi\right\Vert _{\infty}}%
\]
to obtain approximations for the function $v_{\mu,q}.$ Hence,
\[
\dfrac{\mu}{\left\Vert v_{\mu,q}\right\Vert _{\infty}^{p-q}}\rightarrow
\lambda_{p}\text{ \ \ and \ }\dfrac{v_{\mu,q}}{\left\Vert v_{\mu,q}\right\Vert
_{\infty}}\rightarrow e_{p}\text{ (in the }C^{1}\text{ norm)}%
\]
as $q\rightarrow p,$ a result that we prove in the next section.

For the construction of the normalized sequence of Algorithm 2 one needs the
following result:

\begin{theorem}
\label{C2}Suppose $1<q<p.$ Then the normalized sequence $\left\{
w_{n}/\left\Vert w_{n}\right\Vert _{\infty}\right\}  $ where $w_{n}$ is
defined by%
\[
w_{0}:=1\ \ \text{and}\ \left\{
\begin{array}
[c]{ll}%
\underset{}{-\Delta_{p}w_{n+1}=\left(  \dfrac{w_{n}}{\left\Vert w_{n}%
\right\Vert _{\infty}}\right)  ^{q-1}} & \text{ \ \ in }\Omega\\
w_{n+1}=0 & \text{ \ \ on }\partial\Omega
\end{array}
\right.
\]
converges in the $C^{1}(\overline{\Omega})$ norm to $v_{q}/\left\Vert
v_{q}\right\Vert _{\infty}\ $where $v_{q}\in C^{1,\alpha}(\overline{\Omega
})\cap W_{0}^{1,p}(\Omega)$ is the solution of (\ref{Laneq}) with $\mu=k_{p}.$
\end{theorem}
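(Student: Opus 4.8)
The plan is to reduce Theorem~\ref{C2} to the already-proven Theorem~\ref{muq} by identifying the normalized sequence $\{w_n/\|w_n\|_\infty\}$ with a rescaled copy of the inverse-iteration sequence for a suitably chosen $\mu$. First I would make the elementary observation that if $w$ solves $-\Delta_p w = g$ in $\Omega$ with $w=0$ on $\partial\Omega$ and $c>0$, then $cw$ solves $-\Delta_p(cw) = c^{p-1} g$; combined with the positivity of each $w_n$ (which follows from the comparison principle, since the right-hand sides $(w_n/\|w_n\|_\infty)^{q-1}$ are nonnegative and not identically zero) and the homogeneity of the map $v\mapsto (\mu/\|v\|_\infty^{p-q})$, this suggests that the normalization at each step is absorbed by the scaling invariance of the problem.

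The key step is to track the scalars $\|w_n\|_\infty$ explicitly. I would set $t_n := \|w_n\|_\infty$, note $t_0 = 1$, and show by induction that $w_{n}/t_{n}$ coincides with $v_n/\|v_n\|_\infty$, where $\{v_n\}$ is the sequence of Theorem~\ref{muq} built with $\mu = k_p$ and starting vector $v_1 = \phi/\|\phi\|_\infty$ (recall $\overline v_{k_p,q} = \phi/\|\phi\|_\infty$ since $M=(\mu/k_p)^{1/(p-q)}=1$). Concretely: $w_0=1 \geq \phi/\|\phi\|_\infty$ pointwise, so by the comparison principle $w_1 \geq v_1$; more importantly, because the right-hand side of the $w$-iteration only depends on $w_n$ through its normalization $w_n/t_n$, one gets $-\Delta_p w_{n+1} = (w_n/t_n)^{q-1}$, which is exactly the equation defining the $v$-iteration \emph{fed with the normalized iterate}. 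Since $v\mapsto -\Delta_p^{-1}(\mu v^{q-1})$ applied to a normalized function, followed by renormalization, is precisely the step that Theorem~\ref{muq} shows converges (the intermediate scalar factors cancel in $v_n/\|v_n\|_\infty$), the two normalized sequences agree for all $n\geq 1$. Then Theorem~\ref{muq} gives $v_n \to v_q := v_{k_p,q}$ in $C^1(\overline\Omega)$, and since $\|v_q\|_\infty>0$ the normalization map $v\mapsto v/\|v\|_\infty$ is continuous at $v_q$ in the $C^1$ norm, so $w_n/\|w_n\|_\infty = v_n/\|v_n\|_\infty \to v_q/\|v_q\|_\infty$ in $C^1(\overline\Omega)$, as claimed.

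An alternative, more self-contained route avoids the explicit bookkeeping: one shows directly that $\{w_n/\|w_n\|_\infty\}$ is bounded in $C^{1,\alpha}(\overline\Omega)$ uniformly in $n$ via Theorem~\ref{Liebb} (the right-hand sides $(w_n/\|w_n\|_\infty)^{q-1}$ are bounded by $1$, so $\Lambda=1$, $M$ uniform, giving a uniform $C^{1,\alpha}$ bound), extracts a $C^1$-convergent subsequence by Arzelà--Ascoli, passes to the limit using continuity of $-\Delta_p^{-1}$ to see the limit is a normalized solution of (\ref{Laneq}) with $\mu = k_p$, and invokes uniqueness of the positive solution $v_q$ to pin down the full limit. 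I would still need monotonicity or a Dini-type argument to upgrade subsequential convergence to convergence of the whole sequence; the cleanest way is to prove $w_n/\|w_n\|_\infty$ is itself monotone decreasing in $\Omega$ (analogous to the $v_{n+1}\leq v_n$ estimate in Theorem~\ref{muq}), which requires checking $w_1/\|w_1\|_\infty \leq w_0 = 1$ and then propagating the inequality through the iteration.

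The main obstacle is the normalization: unlike the clean descending chain $v_{n+1}\leq v_n\leq \overline v_{\mu,q}$ in Theorem~\ref{muq}, here the scalars $\|w_n\|_\infty$ are not a priori monotone, so the comparison-principle argument does not transfer verbatim. The resolution I expect to be cleanest is the first route — exhibiting the exact algebraic identity $w_n/\|w_n\|_\infty = v_n/\|v_n\|_\infty$ — because it makes the convergence a corollary of Theorem~\ref{muq} rather than a fresh compactness argument; the only care needed there is the base case $n=1$ and verifying the scalar cancellation at each inductive step. The second route's obstacle is instead establishing monotonicity of the normalized sequence from scratch, which is doable but requires an extra comparison argument at the first step to anchor the induction.
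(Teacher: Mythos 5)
Your primary route is precisely the paper's proof: exploiting the $(p-1)$-homogeneity of $-\Delta_p$, one shows by induction that each $w_{n+1}$ is a positive scalar multiple of the iterate $v_{n+1}$ of Theorem~\ref{muq} with $\mu=k_p$ (explicitly $w_{n+1}=k_p^{-\frac{1}{p-1}}\Vert v_n\Vert_\infty^{-\frac{q-1}{p-1}}v_{n+1}$), hence $w_n/\Vert w_n\Vert_\infty=v_n/\Vert v_n\Vert_\infty$, and the $C^1$ convergence to $v_q/\Vert v_q\Vert_\infty$ follows from Theorem~\ref{muq}. (Your incidental remark that $w_0\geqslant \phi/\Vert\phi\Vert_\infty$ gives $w_1\geqslant v_1$ by comparison is neither needed nor generally valid---in fact $w_1=\phi$ exactly---but it plays no role in the argument, and your alternative compactness route is correctly flagged as requiring extra work to get full-sequence convergence.)
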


\begin{proof}
Let $\left\{  v_{n}\right\}  $ be the sequence defined by
\begin{equation}
v_{1}:=\frac{\phi}{\left\Vert \phi\right\Vert _{\infty}}\text{ \ and
\ }\left\{
\begin{array}
[c]{ll}%
-\Delta_{p}v_{n+1}=k_{p}v_{n}^{q-1} & \text{ \ \ in }\Omega\\
v_{n+1}=0 & \text{ \ \ on }\partial\Omega.
\end{array}
\right.  \label{vmu==1}%
\end{equation}
It follows from Theorem \ref{muq} that the sequence $\left\{  v_{n}\right\}  $
is decreasing and converges in the $C^{1}(\overline{\Omega})$ norm to the
solution $v_{q}\in C^{1,\alpha}(\overline{\Omega})\cap W_{0}^{1,p}(\Omega)$ of
the Lane-Emden problem%
\begin{equation}
\left\{
\begin{array}
[c]{ll}%
-\Delta_{p}v=k_{p}v^{q-1} & \text{ \ \ in }\Omega\\
v=0 & \text{ \ \ on }\partial\Omega
\end{array}
\right.  \label{mu==kp}%
\end{equation}

Since $w_{1}=\phi$ we have that
\[
w_{2}=k_{p}^{-\frac{1}{p-1}}v_{2}%
\]
and, in particular $\dfrac{w_{2}}{\left\Vert w_{2}\right\Vert _{\infty}%
}=\dfrac{v_{2}}{\left\Vert v_{2}\right\Vert _{\infty}}.$ In fact, this follows
from the comparison principle:%
\begin{align*}
-\Delta_{p}(k_{p}^{-\frac{1}{p-1}}v_{2})  &  =k_{p}^{-1}\left(  -\Delta
_{p}v_{2}\right) \\
&  =k_{p}^{-1}k_{p}v_{1}^{q-1}=\left(  \frac{\phi}{\left\Vert \phi\right\Vert
}\right)  ^{q-1}=\left(  \frac{w_{1}}{\left\Vert w_{1}\right\Vert _{\infty}%
}\right)  ^{q-1}=-\Delta_{p}w_{2.}%
\end{align*}
Repeating this procedure we obtain
\begin{align*}
-\Delta_{p}\left(  k_{p}^{-\frac{1}{p-1}}\left\Vert v_{2}\right\Vert _{\infty
}^{-\frac{q-1}{p-1}}v_{3}\right)   &  =k_{p}^{-1}\left\Vert v_{2}\right\Vert
_{\infty}^{-(q-1)}\left(  -\Delta_{p}v_{3}\right) \\
&  =k_{p}^{-1}\left\Vert v_{2}\right\Vert _{\infty}^{-(q-1)}k_{p}v_{2}^{q-1}\\
&  =\left(  \frac{v_{2}}{\left\Vert v_{2}\right\Vert _{\infty}}\right)
^{q-1}=\left(  \frac{w_{2}}{\left\Vert w_{2}\right\Vert _{\infty}}\right)
^{q-1}=-\Delta_{p}w_{3},
\end{align*}
that is%
\[
w_{3}=k_{p}^{-\frac{1}{p-1}}\left\Vert v_{2}\right\Vert _{\infty}^{-\frac
{q-1}{p-1}}v_{3}%
\]
and%
\[
\frac{w_{3}}{\left\Vert w_{3}\right\Vert _{\infty}}=\frac{v_{3}}{\left\Vert
v_{3}\right\Vert _{\infty}}.
\]
Therefore, by an induction argument we conclude that%
\[
w_{n+1}=k_{p}^{-\frac{1}{p-1}}\left\Vert v_{n}\right\Vert _{\infty}%
^{-\frac{q-1}{p-1}}v_{n+1}\text{ \ and \ }\frac{w_{n+1}}{\left\Vert
w_{n+1}\right\Vert _{\infty}}=\frac{v_{n+1}}{\left\Vert v_{n+1}\right\Vert
_{\infty}}%
\]
for all $n\geqslant2$. Hence, it follows from Theorem \ref{muq} that%
\[
\frac{w_{n+1}}{\left\Vert w_{n+1}\right\Vert _{\infty}}=\frac{v_{n+1}%
}{\left\Vert v_{n+1}\right\Vert _{\infty}}\rightarrow\frac{v_{q}}{\left\Vert
v_{q}\right\Vert _{\infty}}.
\]

\end{proof}

\section{Convergence of the method\label{The convergence}}

\begin{theorem}
\label{convergence}\textit{For }$\mu>0$\textit{ and for each }$1<q<p$\textit{
set }%
\begin{equation}
u_{q}:=\dfrac{v_{\mu,q}}{\left\Vert v_{\mu,q}\right\Vert _{\infty}},
\label{uq}%
\end{equation}
\textit{ where }$v_{\mu,q}\in C^{1,\alpha}\left(  \overline{\Omega}\right)
$\textit{ is the unique positive solution of }(\ref{Laneq})\textit{, and }%
\begin{equation}
\mu_{q}:=\dfrac{\mu}{\left\Vert v_{\mu,q}\right\Vert _{\infty}^{p-q}}.
\label{lamdaq}%
\end{equation}
\textit{Then }$\mu_{q}\rightarrow\lambda_{p}$\textit{ and }$u_{q}\rightarrow
e_{p}$\textit{ in }$C^{1}\left(  \overline{\Omega}\right)  $\textit{ as
}$q\rightarrow p^{-}$\textit{.}
\end{theorem}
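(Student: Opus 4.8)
The plan is to show that the family $\{(\mu_q,u_q)\}_{1<q<p}$ is precompact in $\mathbb{R}\times C^{1}(\overline{\Omega})$ and that its only limit point is $(\lambda_p,e_p)$; convergence as $q\to p^{-}$ then follows by a routine subsequence argument. The starting point is that $u_q$ itself solves an equation: since $v_{\mu,q}$ solves $(\ref{Laneq})$,
\[
-\Delta_p u_q=\|v_{\mu,q}\|_{\infty}^{1-p}\left(-\Delta_p v_{\mu,q}\right)=\mu\,\|v_{\mu,q}\|_{\infty}^{q-p}\,u_q^{q-1}=\mu_q u_q^{q-1}\quad\text{in }\Omega,
\]
with $u_q=0$ on $\partial\Omega$ and $\|u_q\|_{\infty}=1$. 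Taking $L^{\infty}$-norms in the two-sided bound $(\ref{06})$ and using $\|e_p\|_{\infty}=1$ together with the fact that $\phi/\|\phi\|_{\infty}$ has $L^{\infty}$-norm $1$ gives $(\mu/\lambda_p)^{1/(p-q)}\le\|v_{\mu,q}\|_{\infty}\le(\mu/k_p)^{1/(p-q)}$, hence the $q$-independent bound
\[
k_p\le\mu_q\le\lambda_p\qquad\text{for every }1<q<p.
\]
Consequently the right-hand side $f_q(x,\xi):=\mu_q|\xi|^{q-2}\xi$ satisfies $|f_q(x,\xi)|\le\lambda_p$ for all $|\xi|\le 1$ (because $q-1>0$), so Theorem \ref{Liebb} applied with $M=1$ and $\Lambda=\lambda_p$ yields an exponent $\alpha\in(0,1)$ and a constant $C$, both independent of $q$, with $\|u_q\|_{C^{1,\alpha}(\overline{\Omega})}\le C$.

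Next, I would fix an arbitrary sequence $q_n\to p^{-}$. By Arzela-Ascoli and compactness of $[k_p,\lambda_p]$ one passes to a subsequence (not relabeled) along which $u_{q_n}\to u$ in $C^{1}(\overline{\Omega})$ and $\mu_{q_n}\to\mu_{*}\in[k_p,\lambda_p]$; testing the equation for $u_{q_n}$ with $u_{q_n}$ shows $\int_{\Omega}|\nabla u_{q_n}|^p=\mu_{q_n}\int_{\Omega}u_{q_n}^{q_n}\le\lambda_p|\Omega|$, so $\{u_{q_n}\}$ is bounded in $W_0^{1,p}(\Omega)$ and its weak limit is $u$, whence $u\in W_0^{1,p}(\Omega)$, $u\ge 0$ and $\|u\|_{\infty}=1$. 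To pass to the limit in the weak formulation
\[
\int_{\Omega}|\nabla u_{q_n}|^{p-2}\nabla u_{q_n}\cdot\nabla\varphi\,dx=\mu_{q_n}\int_{\Omega}u_{q_n}^{q_n-1}\varphi\,dx,\qquad\varphi\in W_0^{1,p}(\Omega),
\]
I use that $\xi\mapsto|\xi|^{p-2}\xi$ is continuous (so $|\nabla u_{q_n}|^{p-2}\nabla u_{q_n}\to|\nabla u|^{p-2}\nabla u$ uniformly) and that $t^{q_n-1}\to t^{p-1}$ uniformly on $[0,1]$: indeed $0\le t^{q-1}-t^{p-1}\le(p-q)/(p-1)$ on $[0,1]$, since the maximum of $t^{q-1}-t^{p-1}$ is attained at $t_{*}$ with $t_{*}^{\,p-q}=(q-1)/(p-1)$ and equals $t_{*}^{\,q-1}(p-q)/(p-1)$. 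Hence $u_{q_n}^{q_n-1}\to u^{p-1}$ uniformly, and the passage to the limit shows that $u$ is a nonnegative, nontrivial weak solution of $-\Delta_p u=\mu_{*}u^{p-1}$ in $\Omega$ with $u=0$ on $\partial\Omega$; that is, $\mu_{*}>0$ is an eigenvalue of $(\ref{01})$ with nonnegative eigenfunction $u$.

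Since $\lambda_p$ is the only eigenvalue of $(\ref{01})$ possessing an eigenfunction of constant sign and is simple, this forces $\mu_{*}=\lambda_p$ and $u=c\,e_p$ for some $c>0$; the normalization $\|u\|_{\infty}=1=\|e_p\|_{\infty}$ then gives $u=e_p$. As every sequence $q_n\to p^{-}$ admits a subsequence along which $(\mu_{q_n},u_{q_n})\to(\lambda_p,e_p)$ in $\mathbb{R}\times C^{1}(\overline{\Omega})$ and the limit is always the same, the whole family converges: $\mu_q\to\lambda_p$ and $u_q\to e_p$ in $C^{1}(\overline{\Omega})$ as $q\to p^{-}$. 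The hard part will be securing the $q$-uniformity of the estimates: everything hinges on the upper bound $\mu_q\le\lambda_p$ extracted from $(\ref{06})$, which makes the Lieberman constant $C$ and exponent $\alpha$ independent of $q$; after that, the only other delicate point is the uniform convergence of the exponents on $[0,1]$ recorded above, while the nontriviality of the limit is automatic from the normalization $\|u_q\|_{\infty}\equiv1$ being preserved under uniform convergence.
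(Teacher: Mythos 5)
Your proposal is correct and follows essentially the same route as the paper's proof: the uniform bound $k_p\leqslant\mu_q\leqslant\lambda_p$ extracted from (\ref{06}), the $q$-independent $C^{1,\alpha}$ estimate via Theorem \ref{Liebb}, Arzela--Ascoli plus a subsequence argument, and identification of the limit through the sign-definiteness and simplicity of the first eigenpair. The only cosmetic difference is that you pass to the limit directly in the weak formulation (with the elementary uniform estimate $0\leqslant t^{q-1}-t^{p-1}\leqslant (p-q)/(p-1)$ on $[0,1]$), whereas the paper invokes the continuity of $-\Delta_p^{-1}$ from Theorem \ref{compactness}; both are equally valid.
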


\begin{proof}
Since $\left\Vert u_{q}\right\Vert _{\infty}=1$ and%
\[
-\Delta_{p}u_{q}=\frac{\mu}{\left\Vert v_{\mu,q}\right\Vert _{\infty}^{p-1}%
}v_{\mu,q}^{q-1}=\frac{\mu}{\left\Vert v_{\mu,q}\right\Vert _{\infty}^{p-q}%
}u_{q}^{q-1}=\mu_{q}u_{q}^{q-1},
\]
we have that $u_{q}$ is the unique solution of the problem%
\begin{equation}
\left\{
\begin{array}
[c]{ll}%
-\Delta_{p}u_{q}=\mu_{q}u_{q}^{q-1} & \text{\ \ \ in }\Omega,\\
u_{q}=0 & \text{\ \ \ on }\partial\Omega.
\end{array}
\right.  \label{11}%
\end{equation}
As a consequence of (\ref{06}) we have%
\[
\frac{\mu}{\lambda_{p}}\leqslant\left\Vert v_{\mu,q}\right\Vert _{\infty
}^{p-q}\leqslant\frac{\mu}{k_{p}},
\]%
\begin{equation}
0<\left(  \frac{k_{p}}{\lambda_{p}}\right)  ^{\frac{1}{p-q}}e_{p}\leqslant
u_{q}\leqslant\left(  \frac{\lambda_{p}}{k_{p}}\right)  ^{\frac{1}{p-q}}%
\frac{\phi}{\left\Vert \phi\right\Vert _{\infty}}\text{ \ \ in }%
\Omega\label{upositive}%
\end{equation}
and%
\begin{equation}
k_{p}\leqslant\mu_{q}\leqslant\lambda_{p}. \label{mulambda}%
\end{equation}
Since
\[
0\leqslant\mu_{q}u_{q}^{q-1}\leqslant\lambda_{p},
\]
it follows from Theorem \ref{Liebb} the existence of constants $0<\alpha<1$
and $C>0$ independent of $q$ such that $u_{q}\in C^{1,\alpha}\left(
\overline{\Omega}\right)  $ and%
\[
\left\Vert u_{q}\right\Vert _{C^{1,\alpha}\left(  \overline{\Omega}\right)
}\leqslant C\text{ \ \ for all }1<q<p.
\]
Using the compactness of the immersion $C^{1,\alpha}\left(  \overline{\Omega
}\right)  \hookrightarrow C^{1}\left(  \overline{\Omega}\right)  $, letting
$q_{n}\rightarrow p$ we get, up to a subsequence, $\mu_{q_{n}}\rightarrow
\lambda\in\lbrack k_{p},\lambda_{p}]$ and $u_{q_{n}}\rightarrow u$ in
$C^{1}\left(  \overline{\Omega}\right)  $. Taking the limit in (\ref{11}), we
conclude from Theorem \ref{compactness} that $u$ must satisfy%
\[
\left\{
\begin{array}
[c]{ll}%
-\Delta_{p}u=\lambda u^{p-1} & \text{\ \ \ in }\Omega,\\
u=0 & \text{\ \ \ on }\partial\Omega,
\end{array}
\right.
\]
and $\left\Vert u\right\Vert _{\infty}=1$, whence $\lambda=\lambda_{p}$ and
$u=e_{p}$ because $\lambda$ is an eigenvalue and $u\neq0$ is a corresponding
eigenfuntion that does not change the signal in $\Omega$ (note from
(\ref{upositive}) that $u>0$ in $\Omega$). Since these limits are always the
same, that is, do not depend on particular subsequences, this ends the proof.
\end{proof}

\begin{corollary}
Using the notation of Theorem \ref{C2}, it follows that%
\[
\frac{w_{n}}{\left\Vert w_{n}\right\Vert _{\infty}}\rightarrow e_{p}%
\]
in the $C^{1}(\overline{\Omega})$ norm and%
\[
\left\Vert w_{n}\right\Vert _{\infty}^{1-p}\rightarrow\lambda_{p}%
\]
as $n\rightarrow\infty$ and $q\rightarrow p^{-}.$
\end{corollary}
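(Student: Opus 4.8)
The plan is to combine the two convergence statements already available in the excerpt. Theorem \ref{C2} gives that, for each fixed $1<q<p$, the normalized iterates $w_n/\left\Vert w_n\right\Vert_\infty$ converge in $C^1(\overline\Omega)$ to $v_q/\left\Vert v_q\right\Vert_\infty$, where $v_q$ is the positive solution of the Lane-Emden problem (\ref{Laneq}) with the specific choice $\mu=k_p$. On the other hand, Theorem \ref{convergence} applied with $\mu=k_p$ shows that $v_q/\left\Vert v_q\right\Vert_\infty = u_q \to e_p$ in $C^1(\overline\Omega)$ and that $\mu_q = k_p/\left\Vert v_q\right\Vert_\infty^{p-q} \to \lambda_p$ as $q\to p^-$. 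So the corollary is really a two-parameter limit statement, and the proof amounts to chaining these two limits together carefully.

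Concretely, I would first fix attention on the eigenfunction statement. By Theorem \ref{C2}, for every $q$ we have $\lim_{n\to\infty} w_n/\left\Vert w_n\right\Vert_\infty = u_q$ in $C^1$, and by Theorem \ref{convergence} (with $\mu=k_p$) we have $\lim_{q\to p^-} u_q = e_p$ in $C^1$. Hence, given $\varepsilon>0$, choose $q$ close enough to $p$ that $\left\Vert u_q - e_p\right\Vert_{C^1} < \varepsilon/2$, and then choose $N$ so that $\left\Vert w_n/\left\Vert w_n\right\Vert_\infty - u_q\right\Vert_{C^1} < \varepsilon/2$ for $n\geqslant N$; the triangle inequality gives $\left\Vert w_n/\left\Vert w_n\right\Vert_\infty - e_p\right\Vert_{C^1} < \varepsilon$. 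This is exactly the sense in which the double limit ``$n\to\infty$ and $q\to p^-$'' is to be read.

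For the eigenvalue statement I would use the identity relating $w_n$ to the sequence $\left\{v_n\right\}$ of Theorem \ref{C2}, namely $w_{n+1} = k_p^{-1/(p-1)}\left\Vert v_n\right\Vert_\infty^{-(q-1)/(p-1)} v_{n+1}$ for $n\geqslant2$, together with $v_n\to v_q$ in $C^1$ (so in particular $\left\Vert v_n\right\Vert_\infty \to \left\Vert v_q\right\Vert_\infty$). Taking $\left\Vert\cdot\right\Vert_\infty$ and raising to the power $1-p$ gives
\[
\left\Vert w_{n+1}\right\Vert_\infty^{1-p} = k_p\,\left\Vert v_n\right\Vert_\infty^{q-1}\,\left\Vert v_{n+1}\right\Vert_\infty^{1-p}\ \xrightarrow[n\to\infty]{}\ k_p\,\left\Vert v_q\right\Vert_\infty^{q-1}\left\Vert v_q\right\Vert_\infty^{1-p} = k_p\,\left\Vert v_q\right\Vert_\infty^{q-p} = \mu_q,
\]
recalling the definition $\mu_q = k_p/\left\Vert v_q\right\Vert_\infty^{p-q}$ from (\ref{lamdaq}) with $\mu=k_p$. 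Since Theorem \ref{convergence} gives $\mu_q\to\lambda_p$ as $q\to p^-$, a second $\varepsilon/2$ argument identical to the one above yields $\left\Vert w_n\right\Vert_\infty^{1-p}\to\lambda_p$ in the same double-limit sense.

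I do not expect any genuine obstacle here: everything needed is packaged in Theorems \ref{muq}, \ref{C2} and \ref{convergence}, and the only care required is bookkeeping the order of quantifiers in the iterated limit and making sure the $C^1$-convergence $v_n\to v_q$ is invoked (rather than mere pointwise convergence) so that the sup-norms pass to the limit. The mildest subtlety is simply noting that Theorem \ref{convergence} is stated for arbitrary $\mu>0$, so specializing to $\mu=k_p$ is legitimate and makes $\mu_q$ and $v_q$ in the two theorems coincide.
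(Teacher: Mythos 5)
Your argument is correct and follows essentially the same route as the paper: for the eigenfunction you chain Theorem \ref{C2} (convergence in $n$ for fixed $q$, with $\mu=k_{p}$) with Theorem \ref{convergence} (convergence in $q$), and for the eigenvalue you use the same identity $w_{n+1}=k_{p}^{-1/(p-1)}\left\Vert v_{n}\right\Vert _{\infty}^{-(q-1)/(p-1)}v_{n+1}$ from the proof of Theorem \ref{C2} to get $\left\Vert w_{n+1}\right\Vert _{\infty}^{1-p}\rightarrow k_{p}\left\Vert v_{q}\right\Vert _{\infty}^{q-p}=\mu_{q}\rightarrow\lambda_{p}$, which is exactly the paper's computation. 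Your explicit $\varepsilon/2$ bookkeeping of the iterated limit is just a spelled-out version of the iterated limits the paper writes down.
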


\begin{proof}
For each $1<q<p$ let $v_{q}$ denote the solution of the Lane-Emden problem
(\ref{mu==kp}). It follows from Theorem \ref{convergence} that $v_{q}%
/\left\Vert v_{q}\right\Vert _{\infty}$ converges in the $C^{1}$-norm to the
first eigenfunction $e_{p}$ and that $k_{p}\left\Vert v_{q}\right\Vert
_{\infty}^{q-p}\rightarrow\lambda_{p}.$

Using the notation of Theorem \ref{C2} we have%
\[
\lim_{q\rightarrow p^{-}}\lim_{n\rightarrow\infty}\frac{w_{n}}{\left\Vert
w_{n}\right\Vert _{\infty}}=\lim_{q\rightarrow p^{-}}\lim_{n\rightarrow\infty
}\frac{v_{n}}{\left\Vert v_{n}\right\Vert _{\infty}}=\lim_{q\rightarrow p^{-}%
}\frac{v_{q}}{\left\Vert v_{q}\right\Vert _{\infty}}=e_{p}.
\]
Moreover,
\[
\lim_{n\rightarrow\infty}\left\Vert w_{n+1}\right\Vert _{\infty}^{p-1}%
=\lim_{n\rightarrow\infty}k_{p}^{-1}\left\Vert v_{n+1}\right\Vert _{\infty
}^{p-1}\left\Vert v_{n}\right\Vert ^{1-q}=k_{p}^{-1}\left\Vert v_{q}%
\right\Vert _{\infty}^{p-q},
\]
and hence%
\[
\lim_{q\rightarrow p^{-}}\lim_{n\rightarrow\infty}\left\Vert w_{n+1}%
\right\Vert _{\infty}^{1-p}=\lambda_{p}.
\]

\end{proof}

Next we prove an error estimate in the approximation of $\lambda_{p}$ by
$\mu_{q}$ or, alternatively, by the scaled quotient
\[
\Lambda_{q}:=\mu\dfrac{\left\Vert v_{\mu,q}\right\Vert _{q}^{q}}{\left\Vert
v_{\mu,q}\right\Vert _{p}^{p}},
\]
where $\left\Vert \cdot\right\Vert _{r}$ denotes the norm of the $L^{r}\left(
\Omega\right)  ,$ that is, $\left\Vert w\right\Vert _{r}=\left(  \int_{\Omega
}\left\vert w\right\vert ^{r}dx\right)  ^{\frac{1}{r}}.$

The upper bound $\Lambda_{q}$ together with the lower bound $\mu_{q}$ allows
one to better control the accuracy of the approximation to $\lambda_{p}$.

\begin{theorem}
\textit{There holds:}

\begin{enumerate}
\item[(i)] $\lambda_{p}\leqslant\Lambda_{q}.$

\item[(ii)] $\Lambda_{q}\rightarrow\lambda_{p}$ as $q\rightarrow p^{-}.$

\item[(iii)] There exists a positive constant $K$ which does not depend on $q$
such that%
\begin{equation}
0\leqslant\max\left\{  \left(  \lambda_{p}-\mu_{q}\right)  ,\left(
\Lambda_{q}-\lambda_{p}\right)  \right\}  \leqslant K\left(  p-q\right)
\label{13}%
\end{equation}
for all $q$ sufficiently close to $p$, $q<p$.
\end{enumerate}
\end{theorem}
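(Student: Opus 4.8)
The plan is to prove the three parts in the order (i), (ii), (iii), using the variational characterization of $\lambda_p$ together with the two-sided bound $(\ref{06})$ from Theorem \ref{muq}. For part (i), I would simply plug $v_{\mu,q}$ (or any positive multiple of it) into the Rayleigh quotient $R$. Since $R(v_{\mu,q})\geqslant\lambda_p$ by the variational characterization, it suffices to show that $R(v_{\mu,q})=\Lambda_q$. Testing the weak formulation of $(\ref{Laneq})$ with $v=v_{\mu,q}$ itself gives $\int_\Omega|\nabla v_{\mu,q}|^p\,dx=\mu\int_\Omega v_{\mu,q}^{\,q}\,dx=\mu\|v_{\mu,q}\|_q^q$, so $R(v_{\mu,q})=\mu\|v_{\mu,q}\|_q^q/\|v_{\mu,q}\|_p^p=\Lambda_q$, and (i) follows. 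Note $\Lambda_q$ is scale-invariant in $v_{\mu,q}$, so it equals $\mu_q\|u_q\|_q^q/\|u_q\|_p^p$ as well.

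For part (ii), I would use the $C^1(\overline\Omega)$-convergence $u_q\to e_p$ established in Theorem \ref{convergence}, which in particular gives $L^q$ and $L^p$ convergence of $u_q$ to $e_p$; combined with $\mu_q\to\lambda_p$ and $q\to p^-$, the quotient $\mu_q\|u_q\|_q^q/\|u_q\|_p^p$ converges to $\lambda_p\|e_p\|_p^p/\|e_p\|_p^p=\lambda_p$. The only mild subtlety is letting the exponent $q$ vary in the norm $\|u_q\|_q^q$ simultaneously with $u_q\to e_p$; this is handled using the uniform bound $0<u_q\leqslant(\lambda_p/k_p)^{1/(p-q)}\phi/\|\phi\|_\infty$ from $(\ref{upositive})$ together with dominated convergence, since $(\lambda_p/k_p)^{1/(p-q)}$ is bounded for $q$ bounded away from, say, $(p+1)/2$, and in fact $(\lambda_p/k_p)^{1/(p-q)}\to(\lambda_p/k_p)^{1/(p-1)}$.

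For part (iii), the bound $0\leqslant\lambda_p-\mu_q\leqslant K(p-q)$ follows directly from $(\ref{mulambda})$, namely $k_p\leqslant\mu_q\leqslant\lambda_p$, once we show $\lambda_p-\mu_q=O(p-q)$. From $\mu_q=\mu/\|v_{\mu,q}\|_\infty^{p-q}$ and $\mu/\lambda_p\leqslant\|v_{\mu,q}\|_\infty^{p-q}\leqslant\mu/k_p$ in $(\ref{06})$ one gets $\mu_q=\lambda_p\cdot(\lambda_p/\mu_q)^{?}$-type relations; more directly, writing $\|v_{\mu,q}\|_\infty^{p-q}=\mu/\mu_q$ and taking $(p-q)$-th roots, $\|v_{\mu,q}\|_\infty=(\mu/\mu_q)^{1/(p-q)}$, and since $\mu/\mu_q\in[\mu/\lambda_p,\mu/k_p]$ this quantity is pinched in a bounded interval; then $\lambda_p-\mu_q\leqslant\lambda_p-k_p$ is not yet $O(p-q)$, so a sharper argument is needed. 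The clean route is to use the upper bound $\Lambda_q$: since $\mu_q\leqslant\lambda_p\leqslant\Lambda_q$, it suffices to bound $\Lambda_q-\mu_q$ by $K(p-q)$. Now $\Lambda_q/\mu_q=\|u_q\|_q^q/\|u_q\|_p^p$, and since $0<u_q\leqslant1$ (so $u_q^q\geqslant u_q^p$ as $q<p$) we have $\Lambda_q/\mu_q\geqslant 1$; for the upper estimate, $\|u_q\|_q^q-\|u_q\|_p^p=\int_\Omega u_q^q(1-u_q^{p-q})\,dx$, and using $1-t^{p-q}\leqslant -(p-q)\log t\leqslant(p-q)|\log t|$ together with the uniform lower bound $u_q\geqslant(k_p/\lambda_p)^{1/(p-q)}e_p$ from $(\ref{upositive})$ and integrability of $|\log e_p|$ (valid since $e_p\in C^1(\overline\Omega)$ vanishes only to first order on $\partial\Omega$ by Hopf's lemma), one gets $\|u_q\|_q^q-\|u_q\|_p^p\leqslant C(p-q)$ for $q$ near $p$. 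Combining, $\Lambda_q-\mu_q=\mu_q(\Lambda_q/\mu_q-1)\leqslant\lambda_p\cdot C(p-q)/\|u_q\|_p^p\leqslant K(p-q)$, since $\|u_q\|_p^p$ is bounded below (as $u_q\to e_p$ in $C^1$). This yields $(\ref{13})$ with $K$ independent of $q$. The main obstacle is the last estimate: controlling $\int_\Omega u_q^q|\log u_q|\,dx$ uniformly in $q$, which requires the boundary-behaviour of $e_p$ — precisely, that $e_p$ is comparable to the distance to $\partial\Omega$ near the boundary, so that $|\log e_p|$ is integrable; this uses the smoothness of $\Omega$ and the $C^{1,\alpha}$ regularity plus Hopf's principle for the $p$-Laplacian.
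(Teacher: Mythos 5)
Parts (i) and (ii) of your proposal are correct and follow essentially the same route as the paper: test the weak form of (\ref{Laneq}) with $v_{\mu,q}$ to get $R(v_{\mu,q})=\Lambda_{q}$, and use $\Lambda_{q}=\mu_{q}\|u_{q}\|_{q}^{q}/\|u_{q}\|_{p}^{p}$ together with $u_{q}\rightarrow e_{p}$ and $\mu_{q}\rightarrow\lambda_{p}$. One side remark in (ii) is wrong, though harmless: $(\lambda_{p}/k_{p})^{1/(p-q)}$ is \emph{not} bounded as $q\rightarrow p^{-}$ (the exponent $1/(p-q)\rightarrow\infty$ and $\lambda_{p}/k_{p}>1$, so it blows up; your stated limit $(\lambda_{p}/k_{p})^{1/(p-1)}$ would correspond to $q\rightarrow1$). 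The correct domination is simply $0\leqslant u_{q}\leqslant1$, since $\|u_{q}\|_{\infty}=1$; uniform convergence on the bounded domain then gives (\ref{12}) directly, as in the paper.

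Part (iii) has a genuine gap. You reduce, as the paper does, to bounding $\int_{\Omega}u_{q}^{q}(1-u_{q}^{p-q})\,dx$ by $C(p-q)$, but your estimate does not deliver this. From $1-t^{p-q}\leqslant(p-q)|\log t|$ and the lower bound $u_{q}\geqslant(k_{p}/\lambda_{p})^{1/(p-q)}e_{p}$ of (\ref{upositive}) you only get
\[
(p-q)\,|\log u_{q}|\;\leqslant\;\log\!\left(\frac{\lambda_{p}}{k_{p}}\right)+(p-q)\,|\log e_{p}|,
\]
because the constant in the lower bound degenerates: its logarithm is $\frac{1}{p-q}\log(\lambda_{p}/k_{p})$, which exactly cancels the factor $(p-q)$. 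After integration this leaves the non-vanishing term $\log(\lambda_{p}/k_{p})\,\|u_{q}\|_{q}^{q}\rightarrow\log(\lambda_{p}/k_{p})\,\|e_{p}\|_{p}^{p}>0$ (recall $k_{p}<\lambda_{p}$ strictly), so no bound of the form $K(p-q)$ follows; the appeal to Hopf's lemma and integrability of $\log e_{p}$ does not rescue this. The paper avoids the issue with an elementary pointwise bound: $u_{q}^{q}-u_{q}^{p}\leqslant\max_{0\leqslant t\leqslant1}(t^{q}-t^{p})=(q/p)^{q/(p-q)}\frac{p-q}{p}\leqslant p-q$, which together with $\mu_{q}\leqslant\lambda_{p}$ and $\int_{\Omega}u_{q}^{p}\,dx\geqslant R>0$ (from (\ref{12})) yields (\ref{13}) with $K=\lambda_{p}|\Omega|/R$. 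If you prefer to keep your logarithmic route, the repair is to use the uniform bound $t^{q}|\log t|\leqslant(qe)^{-1}$ for $t\in(0,1]$, which requires no lower bound on $u_{q}$ and no boundary behavior of $e_{p}$ at all.
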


\begin{proof}
(i) follows directly from the variational characterization of $\lambda_{p}$
and (\ref{03}), since%

\[
\lambda_{p}\leqslant\frac{\left\Vert \nabla v_{\mu,q}\right\Vert _{p}^{p}%
}{\left\Vert v_{\mu,q}\right\Vert _{p}^{p}}=\dfrac{\mu\left\Vert v_{\mu
,q}\right\Vert _{q}^{q}}{\left\Vert v_{\mu,q}\right\Vert _{p}^{p}}=\Lambda
_{q}.
\]
In order to prove (ii) we note from Theorem \ref{convergence} that
\begin{equation}
\lim\limits_{q\rightarrow p^{-}}\left\Vert u_{q}\right\Vert _{q}^{q}%
=\lim\limits_{q\rightarrow p^{-}}\left\Vert u_{q}\right\Vert _{p}%
^{p}=\left\Vert e_{p}\right\Vert _{p}^{p}, \label{12}%
\end{equation}
since $u_{q}$ converges uniformly to $e_{p}$ when $q\rightarrow p^{-}$. Thus,
since%
\begin{equation}
\Lambda_{q}=\mu\dfrac{\left\Vert v_{\mu,q}\right\Vert _{q}^{q}}{\left\Vert
v_{\mu,q}\right\Vert _{p}^{p}}=\frac{\mu}{\left\Vert v_{\mu,q}\right\Vert
_{\infty}^{p-q}}\frac{\left\Vert u_{q}\right\Vert _{q}^{q}}{\left\Vert
u_{q}\right\Vert _{p}^{p}}=\mu_{q}\frac{\left\Vert u_{q}\right\Vert _{q}^{q}%
}{\left\Vert u_{q}\right\Vert _{p}^{p}}, \label{14}%
\end{equation}
we obtain
\[
\lim\limits_{q\rightarrow p^{-}}\Lambda_{q}=\left(  \lim\limits_{q\rightarrow
p^{-}}\mu_{q}\right)  \left(  \lim\limits_{q\rightarrow p^{-}}\frac{\left\Vert
u_{q}\right\Vert _{q}^{q}}{\left\Vert u_{q}\right\Vert _{p}^{p}}\right)
=\lambda_{p}.
\]

Now we prove error estimate (\ref{13}). It follows from (i) and
(\ref{mulambda}) that%
\[
\mu_{q}\leqslant\lambda_{p}\leqslant\Lambda_{q}.
\]
Hence,%
\[
0\leqslant\max\left\{  \left(  \lambda_{p}-\mu_{q}\right)  ,\left(
\Lambda_{q}-\lambda_{p}\right)  \right\}  \leqslant\Lambda_{q}-\mu_{q}.
\]
Thus, in order to prove (iii) we need only to bound $\Lambda_{q}-\mu_{q}.$ It
follows from (\ref{14}) that
\[
\Lambda_{q}-\mu_{q}=\mu_{q}\left(  \frac{\left\Vert u_{q}\right\Vert _{q}^{q}%
}{\left\Vert u_{q}\right\Vert _{p}^{p}}-1\right)  =\mu_{q}\frac{\int_{\Omega
}\left(  u_{q}^{q}-u_{q}^{p}\right)  dx}{\int_{\Omega}u_{q}^{p}dx}.
\]
Therefore,%
\begin{align*}
\Lambda_{q}-\mu_{q}  &  \leqslant\lambda_{p}\frac{\int_{\Omega}\left(
u_{q}^{q}-u_{q}^{p}\right)  dx}{\int_{\Omega}u_{q}^{p}dx}\\
&  \leqslant\frac{\lambda_{p}}{\int_{\Omega}u_{q}^{p}dx}\int_{\Omega}\left[
\max_{0\leqslant t\leqslant1}\left(  t^{q}-t^{p}\right)  \right]  dx\\
&  =\frac{\lambda_{p}\left\vert \Omega\right\vert }{\int_{\Omega}u_{q}^{p}%
dx}\left(  \frac{q}{p}\right)  ^{\frac{q}{p-q}}\frac{p-q}{p}\\
&  \leqslant\frac{\lambda_{p}\left\vert \Omega\right\vert }{\int_{\Omega}%
u_{q}^{p}dx}\left(  p-q\right)  .
\end{align*}
Taking into account (\ref{12}), there exists $R>0$ such that $\int_{\Omega
}u_{q}^{p}dx\geqslant R$ for all $q$ near to $p^{-}$. Thus,
\[
0\leqslant\mu\dfrac{\left\Vert v_{\mu,q}\right\Vert _{q}^{q}}{\left\Vert
v_{\mu,q}\right\Vert _{p}^{p}}-\mu_{q}\leqslant\frac{\lambda_{p}\left\vert
\Omega\right\vert }{R}\left(  p-q\right)  =K\left(  p-q\right)  .
\]

\end{proof}

\section{Some numerical results\label{Some numerical results}}

\subsection{Unit Balls}

In this section we present some numerical results in the unit ball of
dimensions $N=2,3,4$ applying Algorithm 1, since in this case $k_{p}$ is
explicitly known. Computations were performed on a Windows 7/ i5 - 4.0 GHz
platform, using the GCC compiler. The numerical approximations for the first
eigenpair were obtained choosing $\mu=k_{p}$ and taking $q=p-0.01$. Thus,
according to (\ref{kpball})%
\[
\mu=k_{p}=N\left(  \frac{p}{p-1}\right)  ^{p-1}.
\]

We recall from (\ref{greenball}) that for the unit ball the functions in the
sequence of iterates are radially ($r=\left\vert x\right\vert $) given by%

\[
v_{n+1}\left(  r\right)  =\int_{r}^{1}\left(  \int_{0}^{\theta}\left(
\frac{s}{\theta}\right)  ^{N-1}k_{p}v_{n}(s)^{q-1}ds\right)  ^{\frac{1}{p-1}%
}d\theta,\text{ \ with \ }v_{0}(r)\equiv\left(  1-\left\vert r\right\vert
^{\frac{p}{p-1}}\right)  .
\]
Thus, starting with the function%
\[
v_{0}(r)\equiv\left(  1-\left\vert r\right\vert ^{\frac{p}{p-1}}\right)
\]
we have implemented the sequence of iterates%
\begin{equation}
v_{n+1}\left(  r\right)  =\frac{pN^{\frac{1}{p-1}}}{p-1}\int_{r}^{1}\left(
\int_{0}^{\theta}\left(  \frac{s}{\theta}\right)  ^{N-1}v_{n}(s)^{(p-1.01)}%
ds\right)  ^{\frac{1}{p-1}}d\theta\label{vnball}%
\end{equation}
which, after normalized by the sup norm, should be close to the normalized
first eigenfunction $e_{p}.$

The first eigenvalue was approximated by the sequence
\[
\dfrac{N}{\left\Vert v_{n}\right\Vert _{\infty}^{0.01}}\left(  \frac{p}%
{p-1}\right)  ^{p-1}%
\]
given by (\ref{lamdaq}).

In order to compute sequence (\ref{vnball}) we mixed the composite Simpson and
trapezoidal methods on a $101$ points mesh for computation of the associated
integrals. We adopted
\begin{equation}
\frac{\Vert v_{n+1}-v_{n}\Vert_{\infty}}{\Vert v_{n}\Vert_{\infty}}<10^{-9}%
\end{equation}
as a stopping criterion.

At Table~\ref{tab:balls}, the results for the first eigenvalue of the
$p$-Laplacian for values of $p$ ranging from $1.1$ to $4.0$ for the unit balls
of dimensions $N=2,$ $3$ and $4$ are displayed and truncated at the fourth
decimal place. The results compare very well with the ones presented in
\cite{BEM1} up to the second decimal digit.

\begin{table}[ptb]
\caption{First eigenvalues of the $p$-Laplacian on the unit ball.}%
\label{tab:balls}
\centering
\begin{tabular}
[c]{lllllllll}\hline
$p$ & $N=2$ & $N=3$ & $N=4$ &  & $p$ & $N=2$ & $N=3$ & $N=4$\\\cline{1-4}%
\cline{6-9}%
1.1 & 2.5666 & 3.8665 & 5.17607 &  & 2.6 & 8.08856 & 14.9747 & 23.8345\\
1.2 & 2.9601 & 4.5026 & 6.0797 &  & 2.7 & 8.50354 & 15.9521 & 25.672\\
1.3 & 3.3182 & 5.1098 & 6.97306 &  & 2.8 & 8.92654 & 16.9646 & 27.6004\\
1.4 & 3.6637 & 5.71889 & 7.89478 &  & 2.9 & 9.35759 & 18.013 & 29.6225\\
1.5 & 4.0053 & 6.3419 & 8.86046 &  & 3.0 & 9.79673 & 19.0977 & 31.7409\\
1.6 & 4.3477 & 6.98495 & 9.87865 &  & 3.1 & 10.244 & 20.2194 & 33.9581\\
1.7 & 4.6932 & 7.65165 & 10.955 &  & 3.2 & 10.6994 & 21.3785 & 36.2769\\
1.8 & 5.0434 & 8.34438 & 12.094 &  & 3.3 & 11.163 & 22.5755 & 38.6999\\
1.9 & 5.3993 & 9.06487 & 13.2991 &  & 3.4 & 11.6347 & 23.8111 & 41.2298\\
2.0 & 5.7616 & 9.81443 & 14.5735 &  & 3.5 & 12.1146 & 25.0856 & 43.8694\\
2.1 & 6.1308 & 10.5942 & 15.9202 &  & 3.6 & 12.6027 & 26.3997 & 46.6213\\
2.2 & 6.5071 & 11.405 & 17.3421 &  & 3.7 & 13.099 & 27.7539 & 49.4884\\
2.3 & 6.8909 & 12.2478 & 18.8418 &  & 3.8 & 13.6034 & 29.1486 & 52.4734\\
2.4 & 7.2823 & 13.1232 & 20.422 &  & 3.9 & 14.1161 & 30.5844 & 55.5792\\
2.5 & 7.6815 & 14.0319 & 22.0855 &  & 4.0 & 14.6369 & 32.0618 &
58.8085\\\hline
\end{tabular}
\end{table}

Graphs of some eigenfunctions generated by the inverse iteration of sublinear
supersolutions are presented in Figures \ref{fig2GraficoAutoFuncoes},
\ref{fig3GraficoAutoFuncoes} and \ref{fig4GraficoAutoFuncoes} for $N=2,3$ and
$4,$ respectively. In these graphs it is possible to observe the asymptotic
behavior of the $L^{\infty}$-normalized eigenfunctions $e_{p}$ with respect to
$p$ for both cases: $p\rightarrow1^{-}$ and $p\rightarrow\infty.$ The
eigenfunctions $e_{p}$ converge to the characteristic function of the ball as
$p\rightarrow1^{+}$ (see \cite{KF}). On the other hand (see \cite{JLM}), as
$p\rightarrow\infty$ these functions converge to the distance function to the
boundary, which in this case is $1-\left\vert x\right\vert .$

\begin{figure}[ptb]
\begin{center}
\includegraphics[height = 5cm]{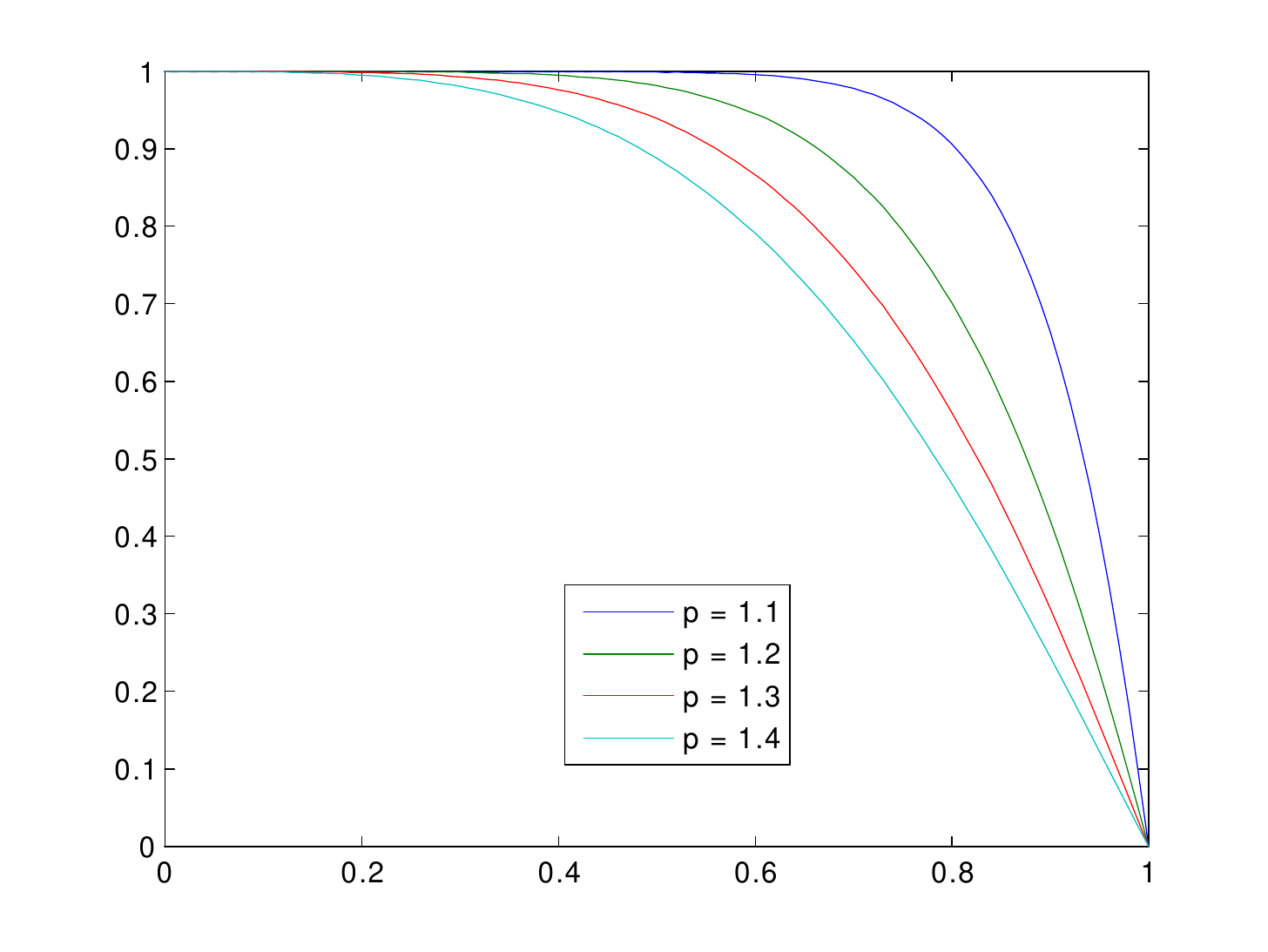}
\includegraphics[height = 5cm]{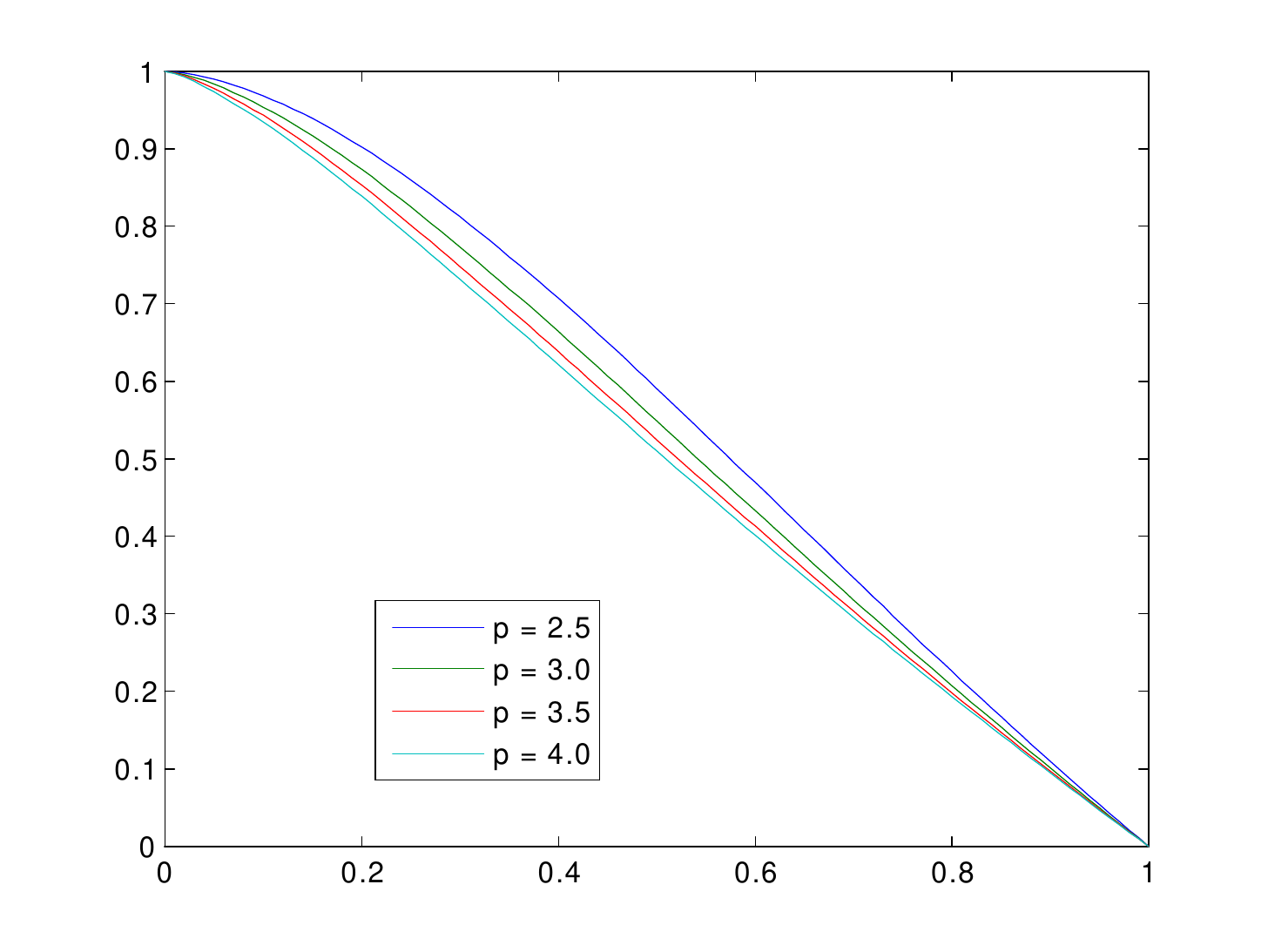}
\end{center}
\caption{Radial profiles of the first eigenfunction for unit ball when $N=2$,
$p=1.1,1.2,1.3,1.4$ (left) and $p=2.5,3.0,3.5,4.0$ (right).}%
\label{fig2GraficoAutoFuncoes}%
\end{figure}

\begin{figure}[ptb]
\begin{center}
\includegraphics[height = 5cm]{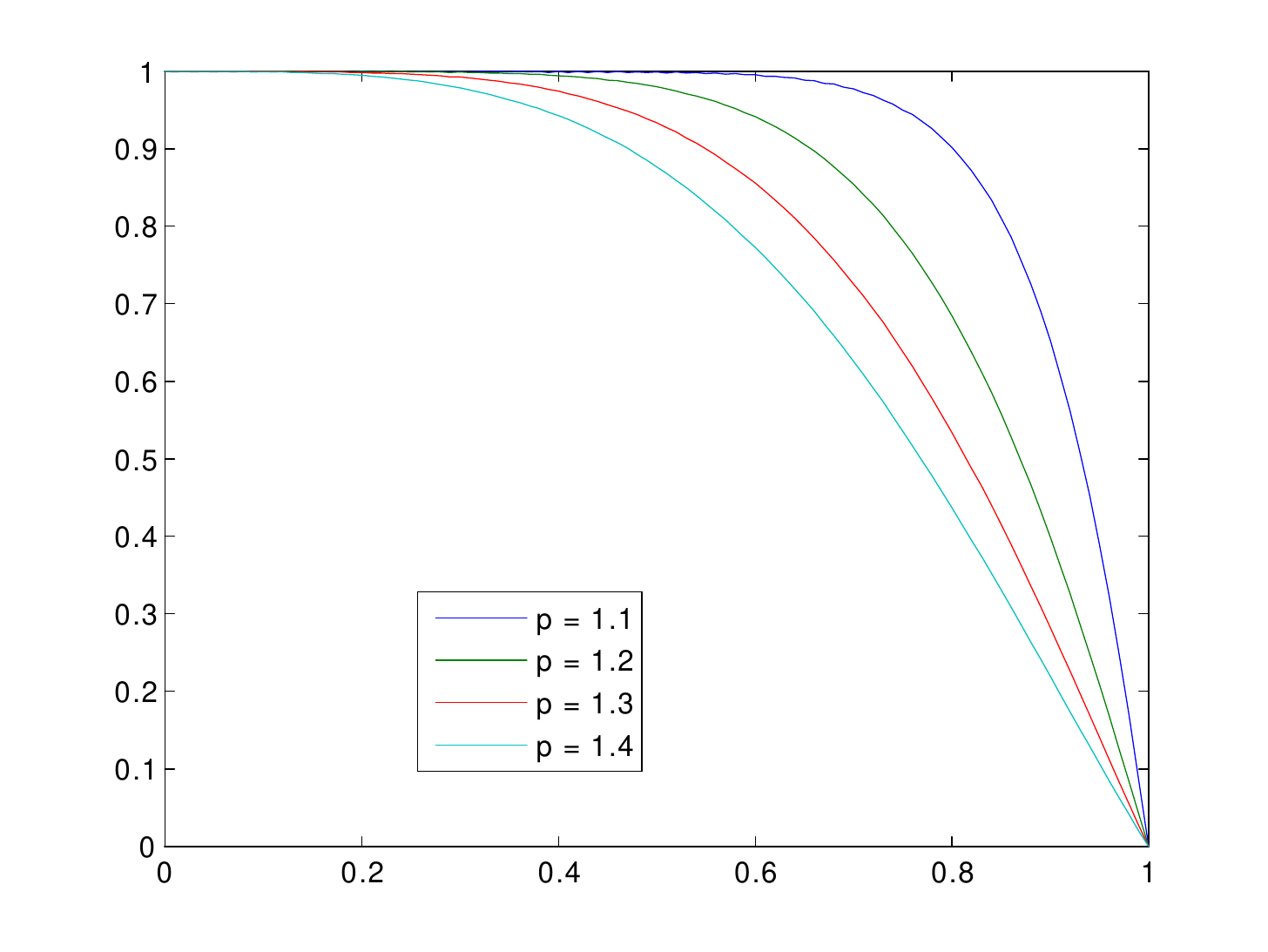}
\includegraphics[height = 5cm]{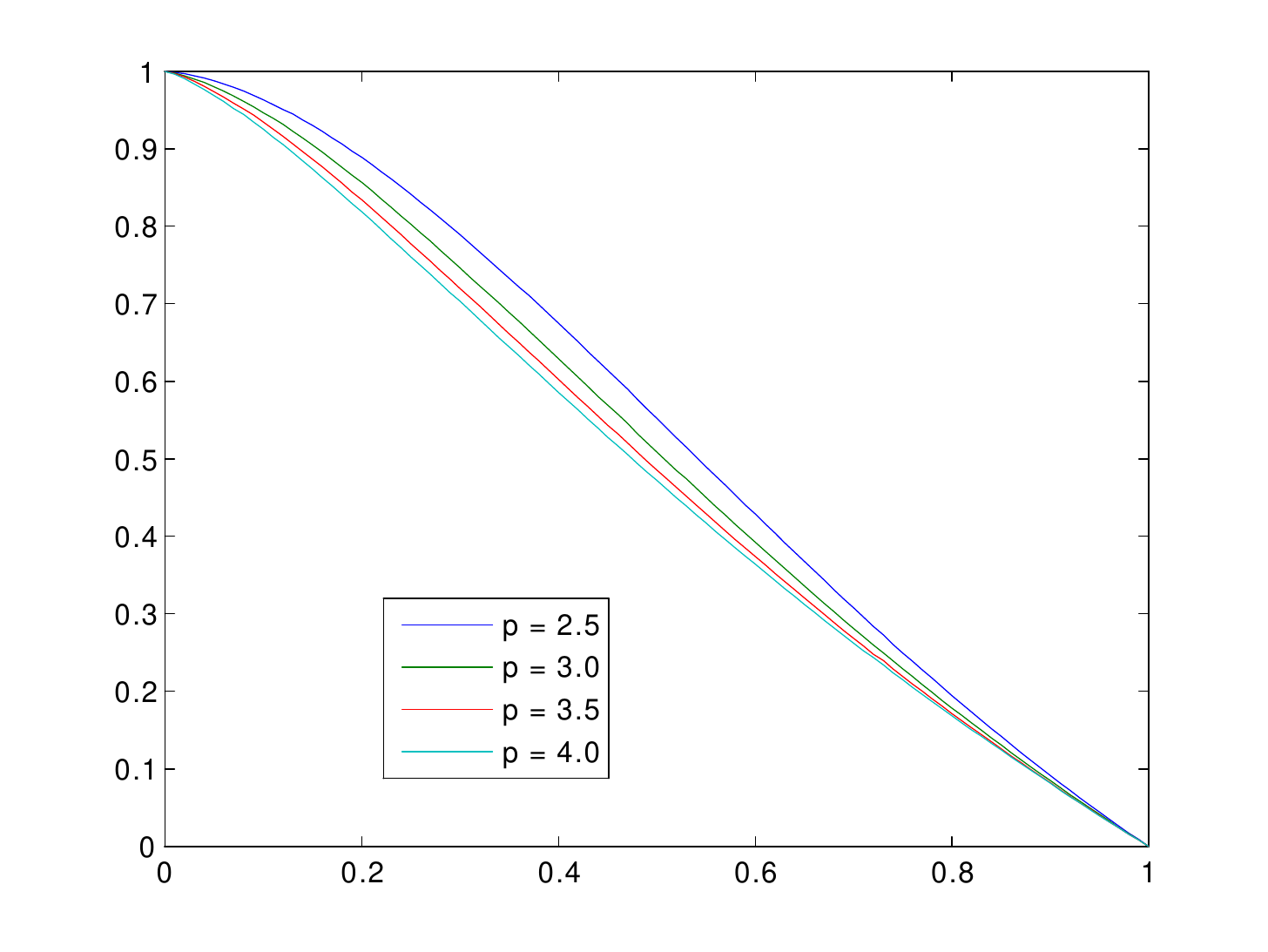}
\end{center}
\caption{Radial profiles of the first eigenfunction for unit ball when $N=3$,
$p=1.1,1.2,1.3,1.4$ (left) and $p=2.5,3.0,3.5,4.0$ (right).}%
\label{fig3GraficoAutoFuncoes}%
\end{figure}

\begin{figure}[ptb]
\begin{center}
\includegraphics[height = 5cm]{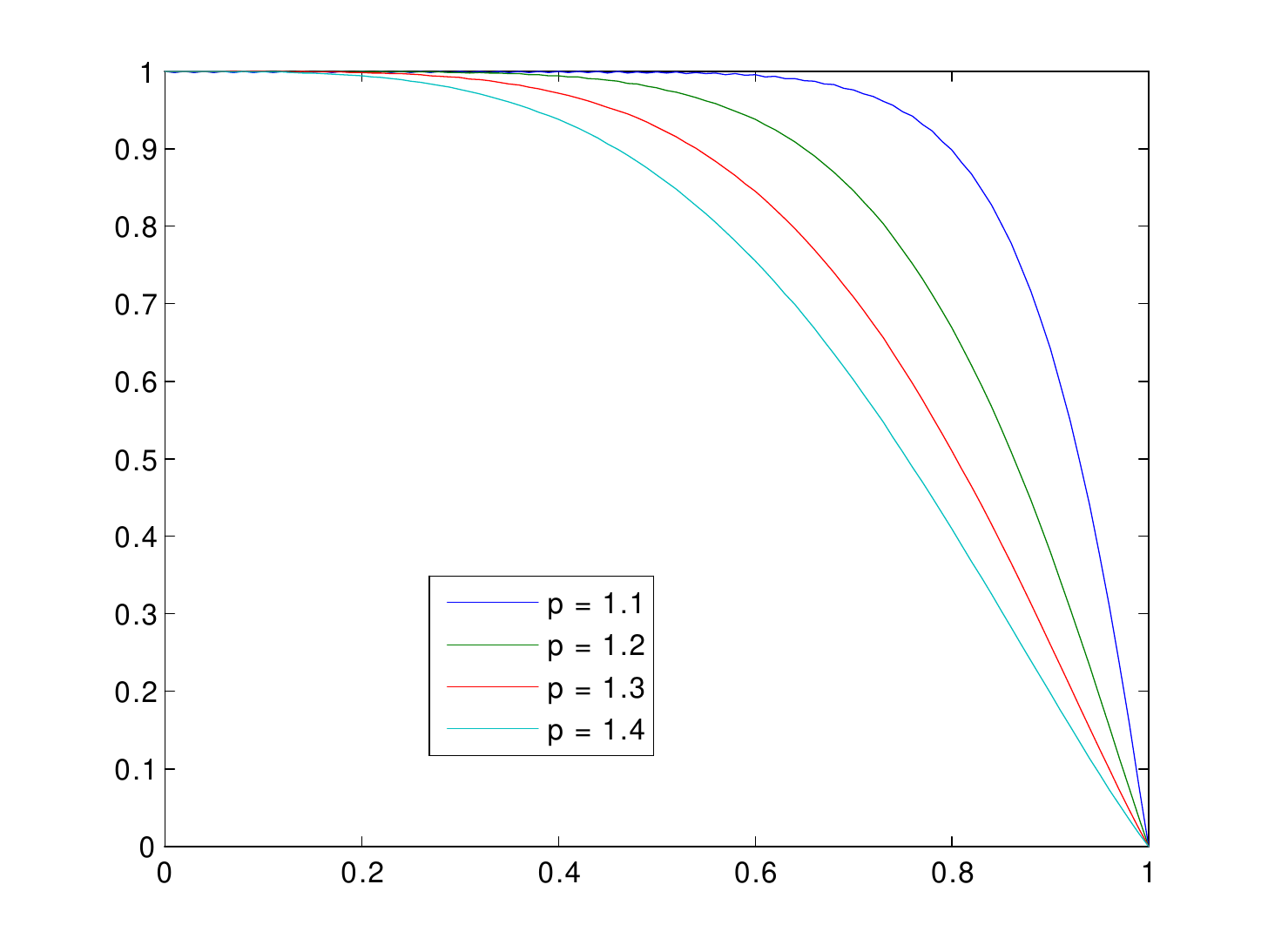}
\includegraphics[height = 5cm]{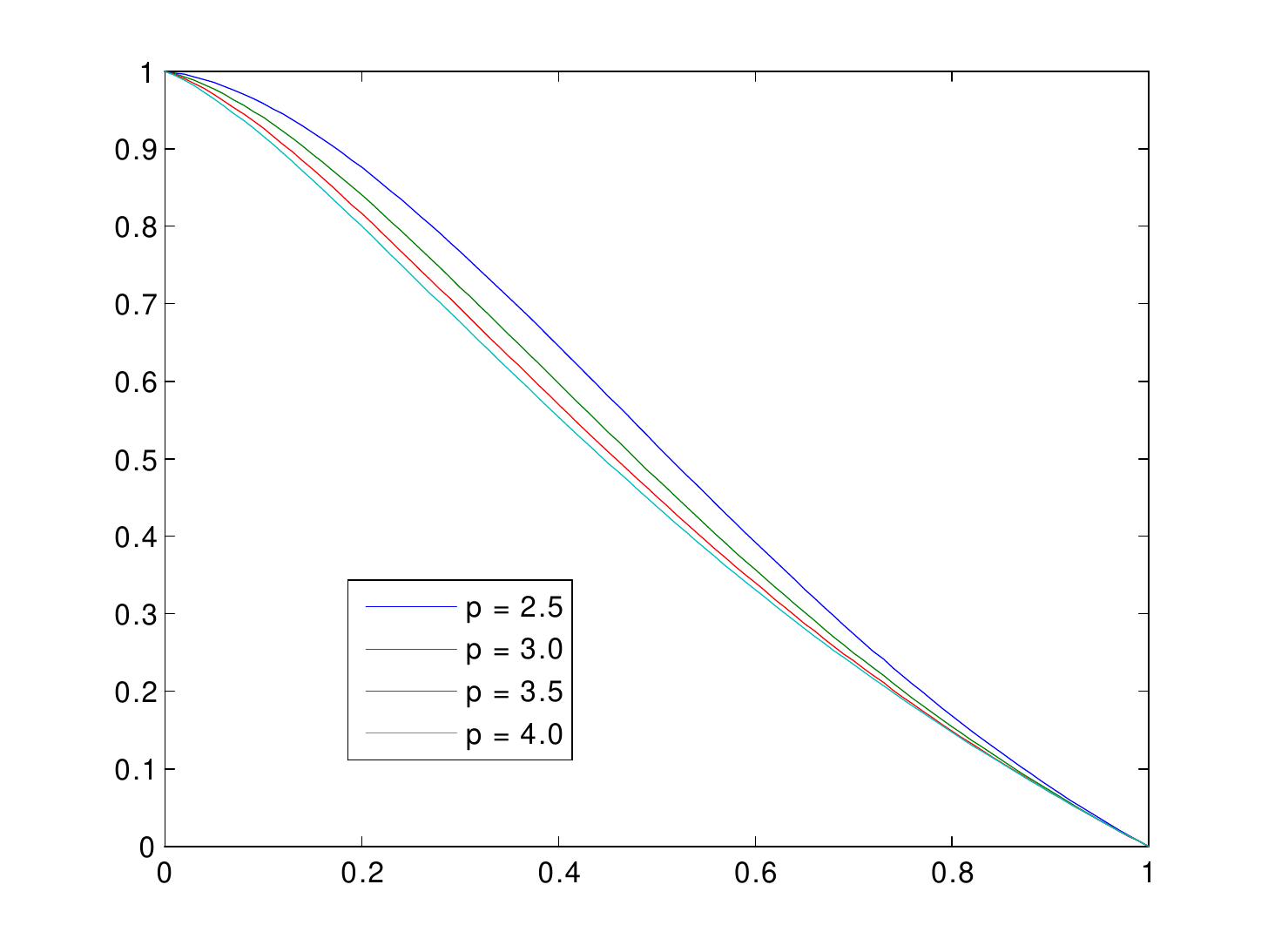}
\end{center}
\caption{Radial profiles of the first eigenfunction for unit ball when $N=4$,
$p=1.1,1.2,1.3,1.4$ (left) and $p=2.5,3.0,3.5,4.0$ (right).}%
\label{fig4GraficoAutoFuncoes}%
\end{figure}

Figure \ref{fig5GraficoConcavity} illustrates the log concavity of the
eigenfunctions $e_{p}.$ Note from Figures \ref{fig2GraficoAutoFuncoes},
\ref{fig3GraficoAutoFuncoes} and \ref{fig4GraficoAutoFuncoes} that each
eigenfunction $e_{p}$ seems to be convex near the boundary ($r=1$)$.$ However,
$\log(e_{p})$ is surely concave for convex domains, as proved in \cite{Saka}.

\begin{figure}[ptb]
\begin{center}
\includegraphics[height = 6.5cm,width = 10cm]{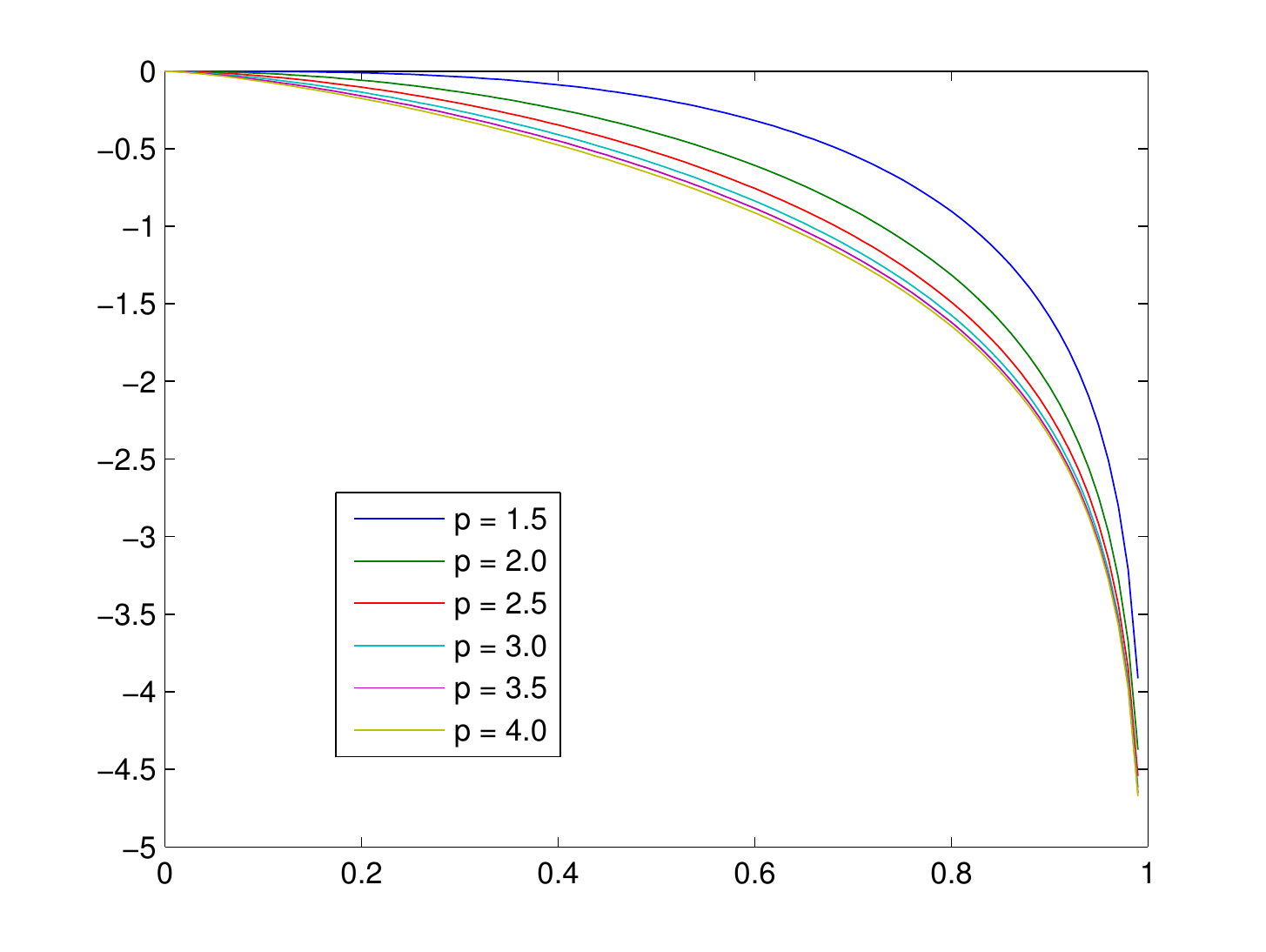}
\includegraphics[height = 6.5cm,width = 10cm]{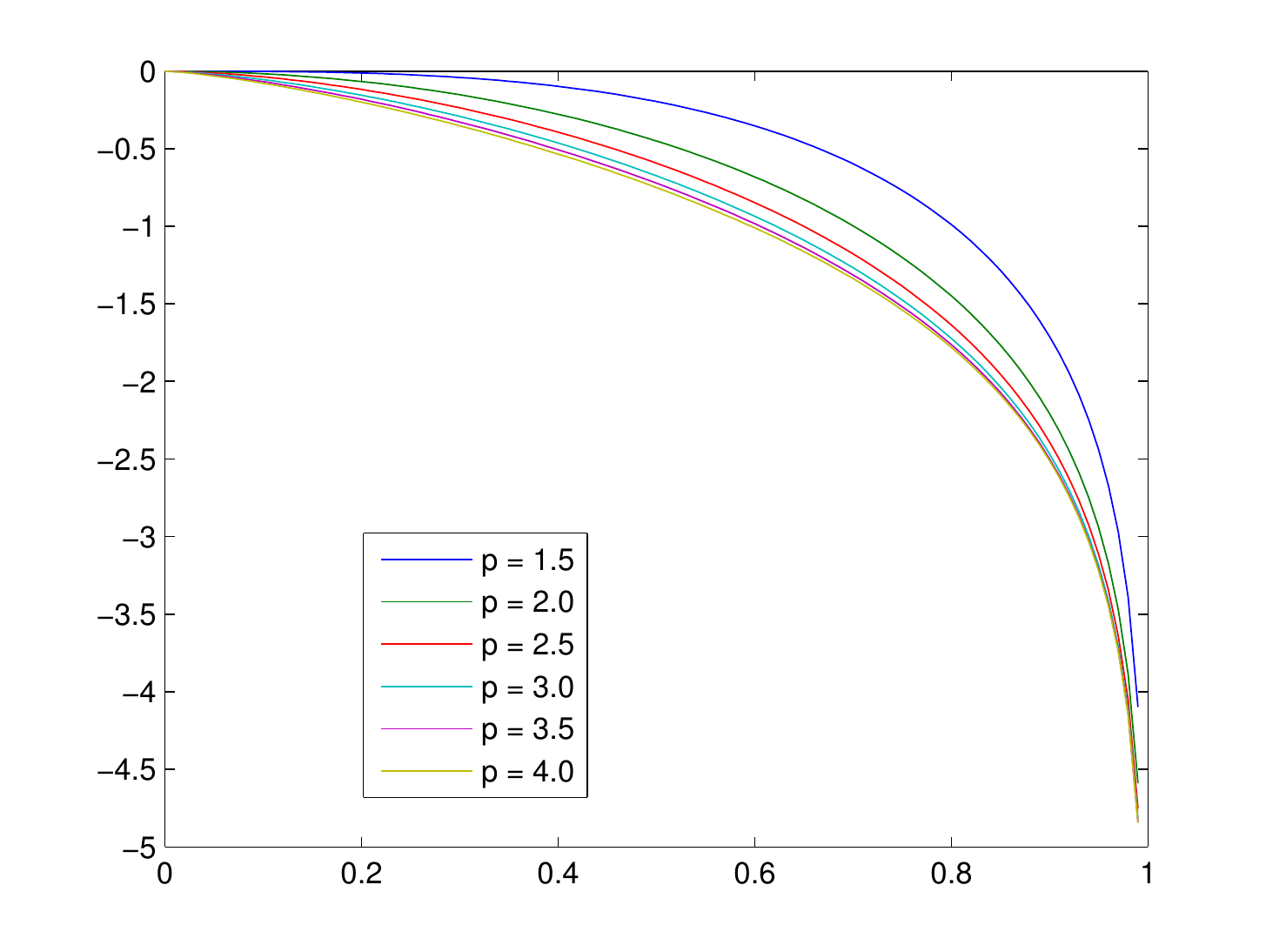}
\includegraphics[height = 6.5cm,width = 10cm]{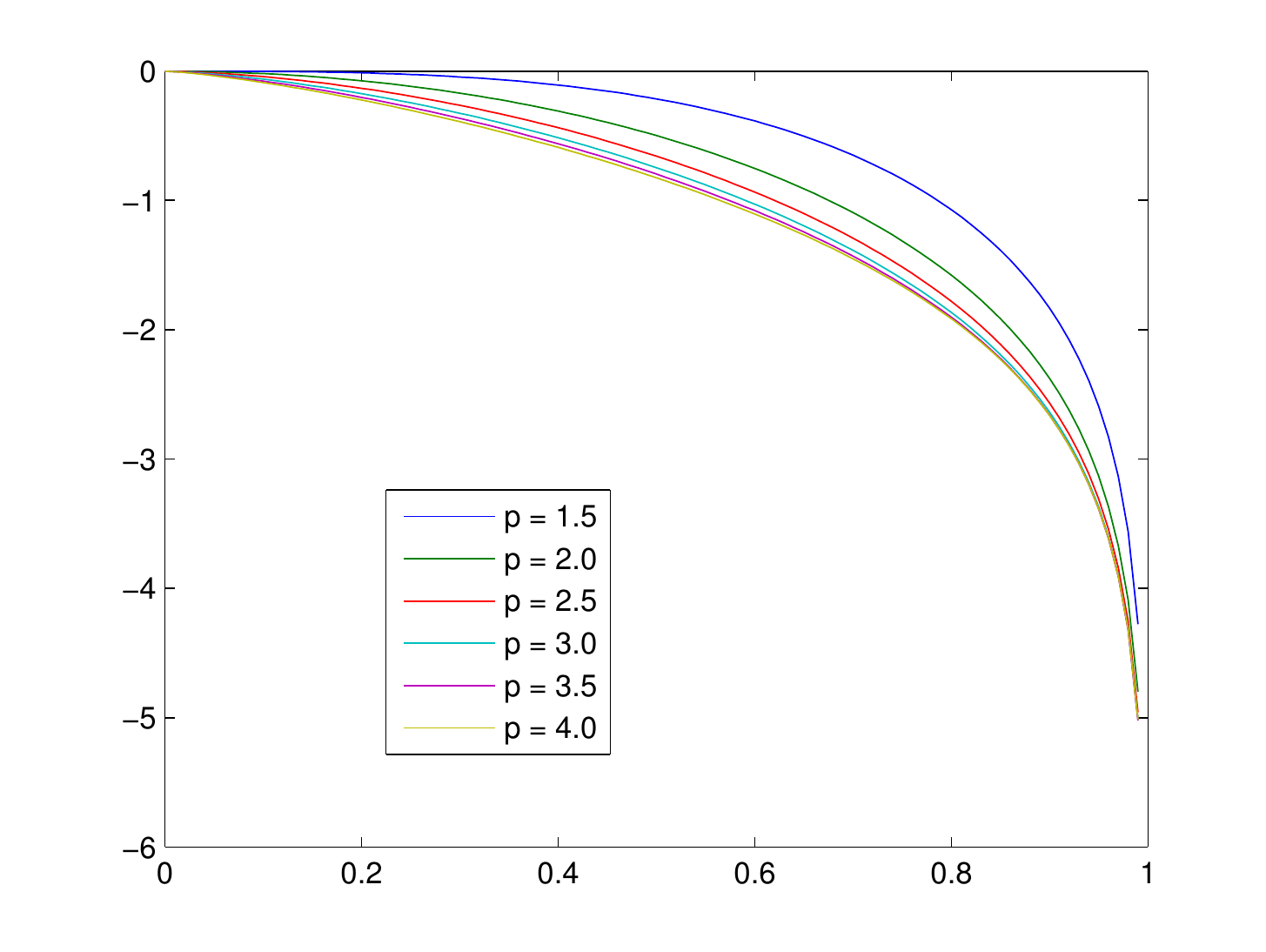}
\end{center}
\caption{Graphs of $\log(e_{p})$ versus $p$ for the $N$-dimensional unit ball,
and $N=2$ (above), $N=3$ (center), $N=4$ (below), and
$p=1.5,2.0,2.5,3.0,3.5,4.0.$ }%
\label{fig5GraficoConcavity}%
\end{figure}

\begin{figure}[ptb]
\begin{center}
\includegraphics[height = 6cm,width = 12cm]{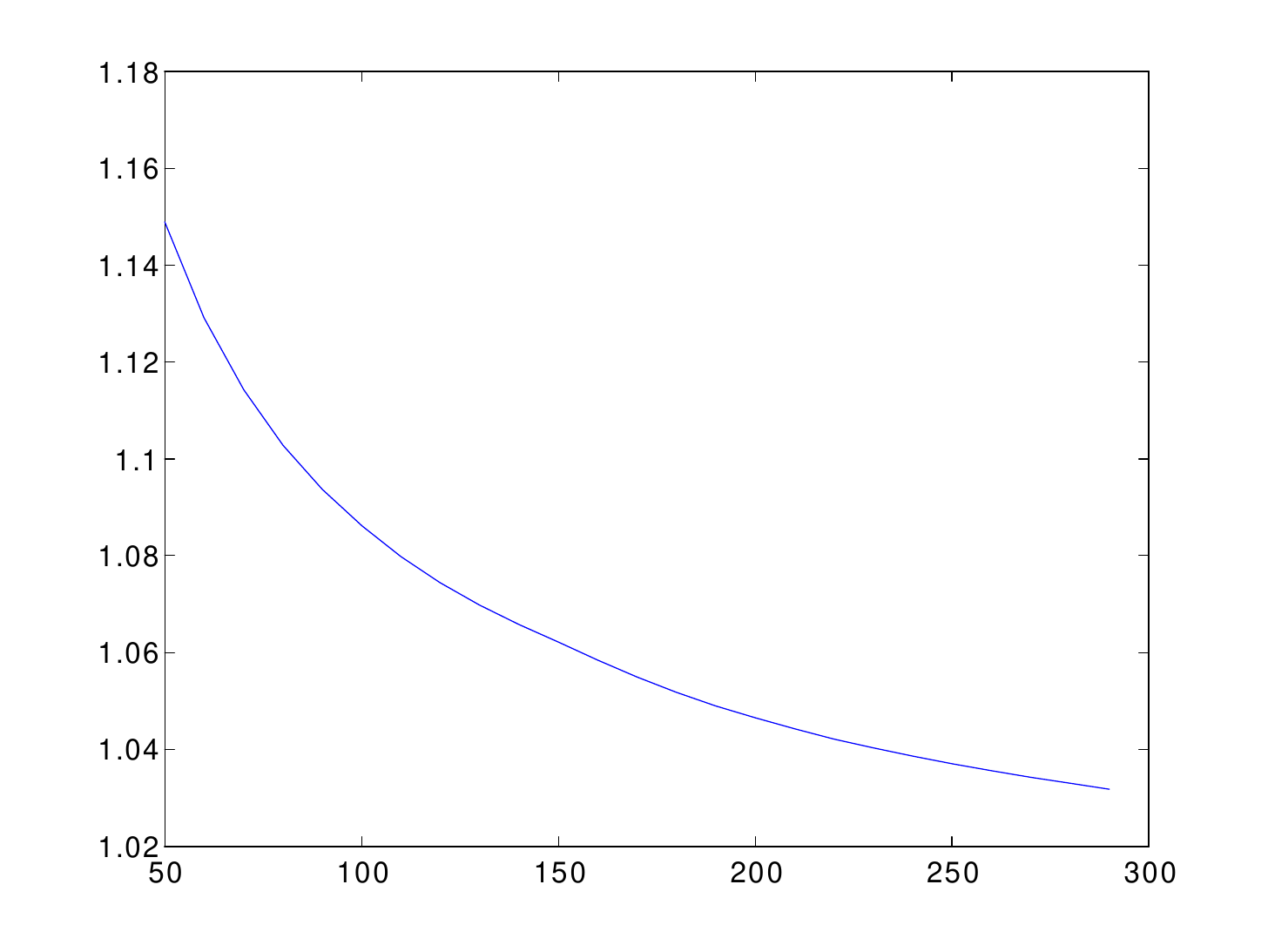}
\includegraphics[height = 6cm,width = 12cm]{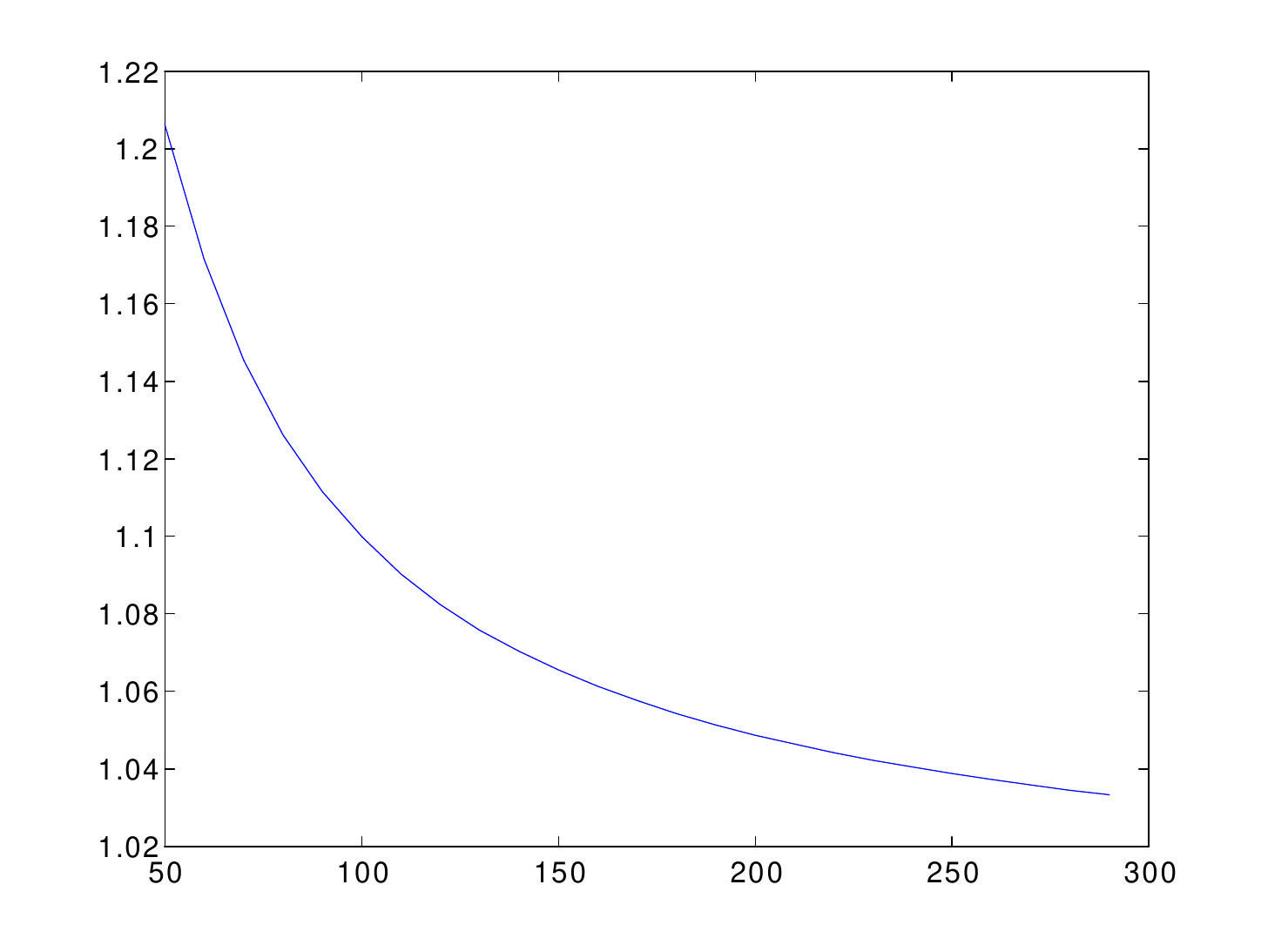}
\includegraphics[height = 6cm,width = 12cm]{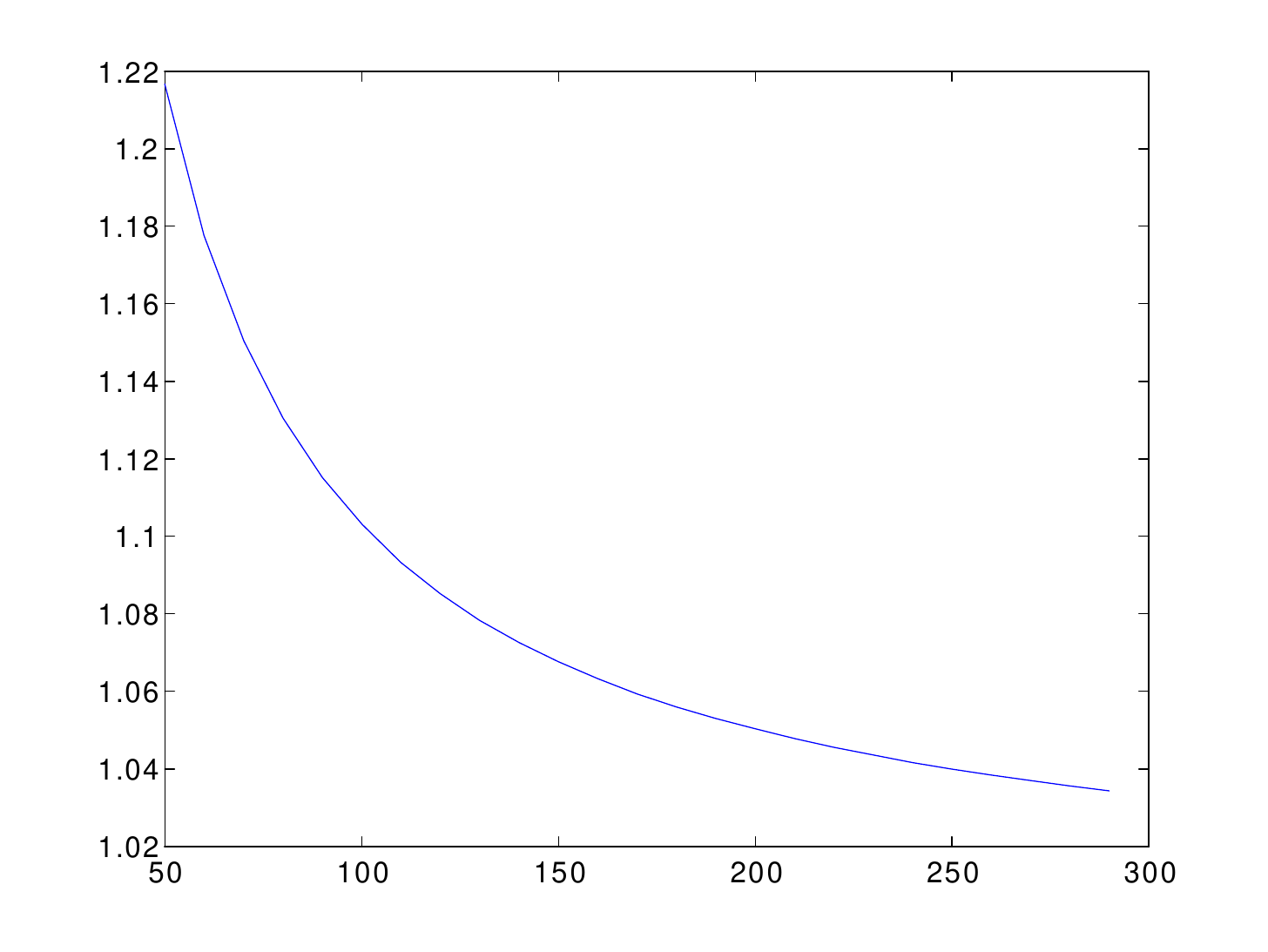}
\end{center}
\caption{Graphs of $\sqrt[p]{\lambda_{p}}$ versus $p$ for the $N$-dimensional
unit ball, and $N=2$ (above), $N=3$ (center), $N=4$ (below), from $p=50$ to
$p=290$, step 10.}%
\label{fig6GraficoRoot}%
\end{figure}

In Figure \ref{fig6GraficoRoot} we see that $\sqrt[p]{\lambda_{p}}$ approaches
$1$ as $p$ increases, which is coherent with the following known asymptotic
behavior (see \cite{JLM}): $\lim\limits_{p\rightarrow\infty}\sqrt[p]%
{\lambda_{p}}=1/R$ where $R$ is the inradius of the domain (that is, the
radius of the largest ball that lies within the domain). Moreover, one
observes from Table~\ref{tab:balls} that $\lambda_{p}$ approaches the value
$N$ of the dimension as $p\rightarrow1^{-}.$ It is known (see \cite{KF}) that
$\lambda_{p}$ tends to $N/R,$ if the domain is a ball of radius $R.$

Finally, for comparison we show in Figure \ref{fig1GraficoAutovalores} graphs
of $p$ versus $\lambda_{p}$ obtained in two ways: one of them through the
method proposed in \cite{BEM1} which is directly based on the inverse power
method (IPM), while the other is the inverse iteration of sublinear
supersolutions (IISS) as developed in the present work.

\begin{figure}[ptb]
\begin{center}
\includegraphics[height = 4cm]{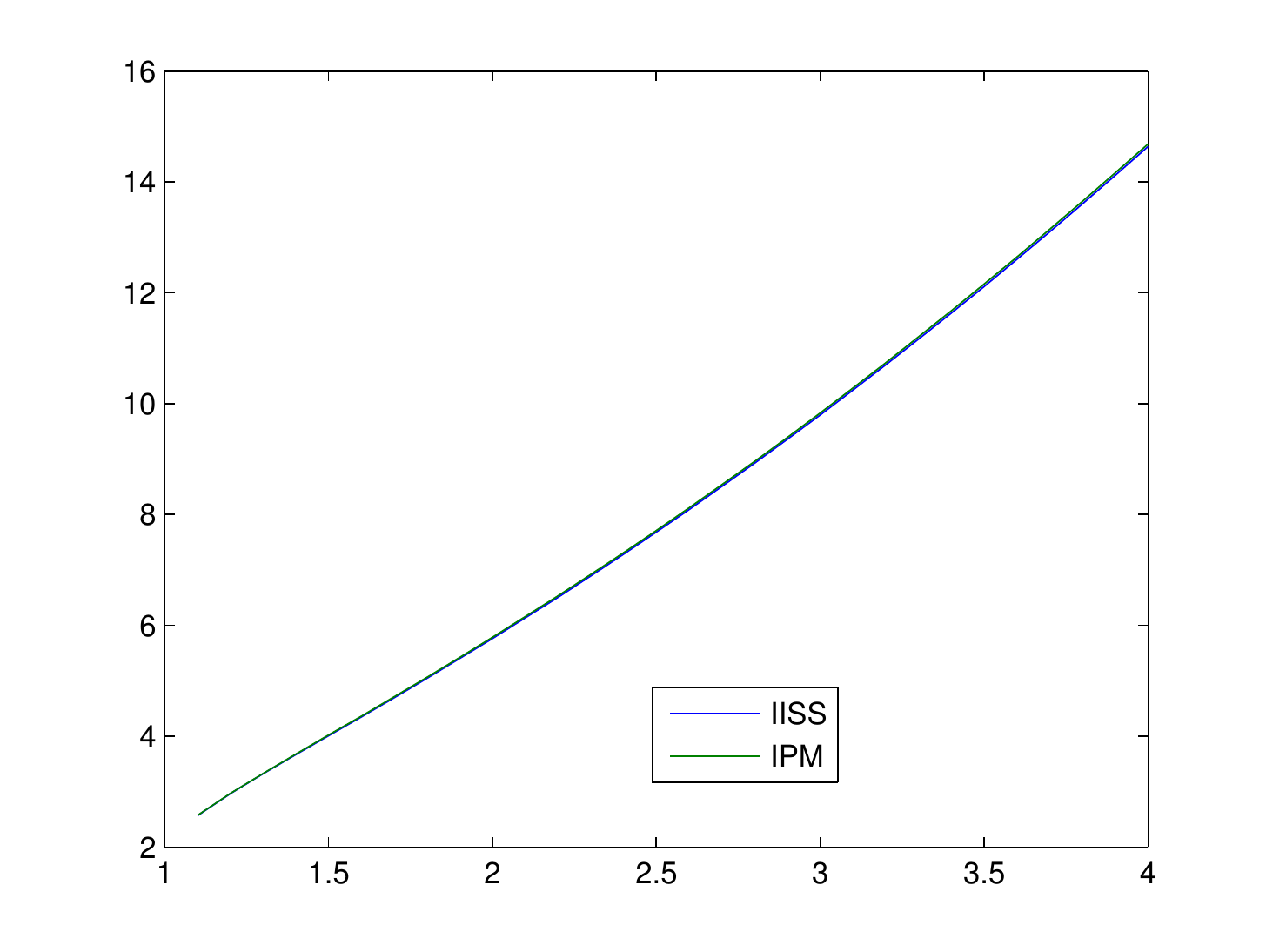}
\includegraphics[height = 4cm]{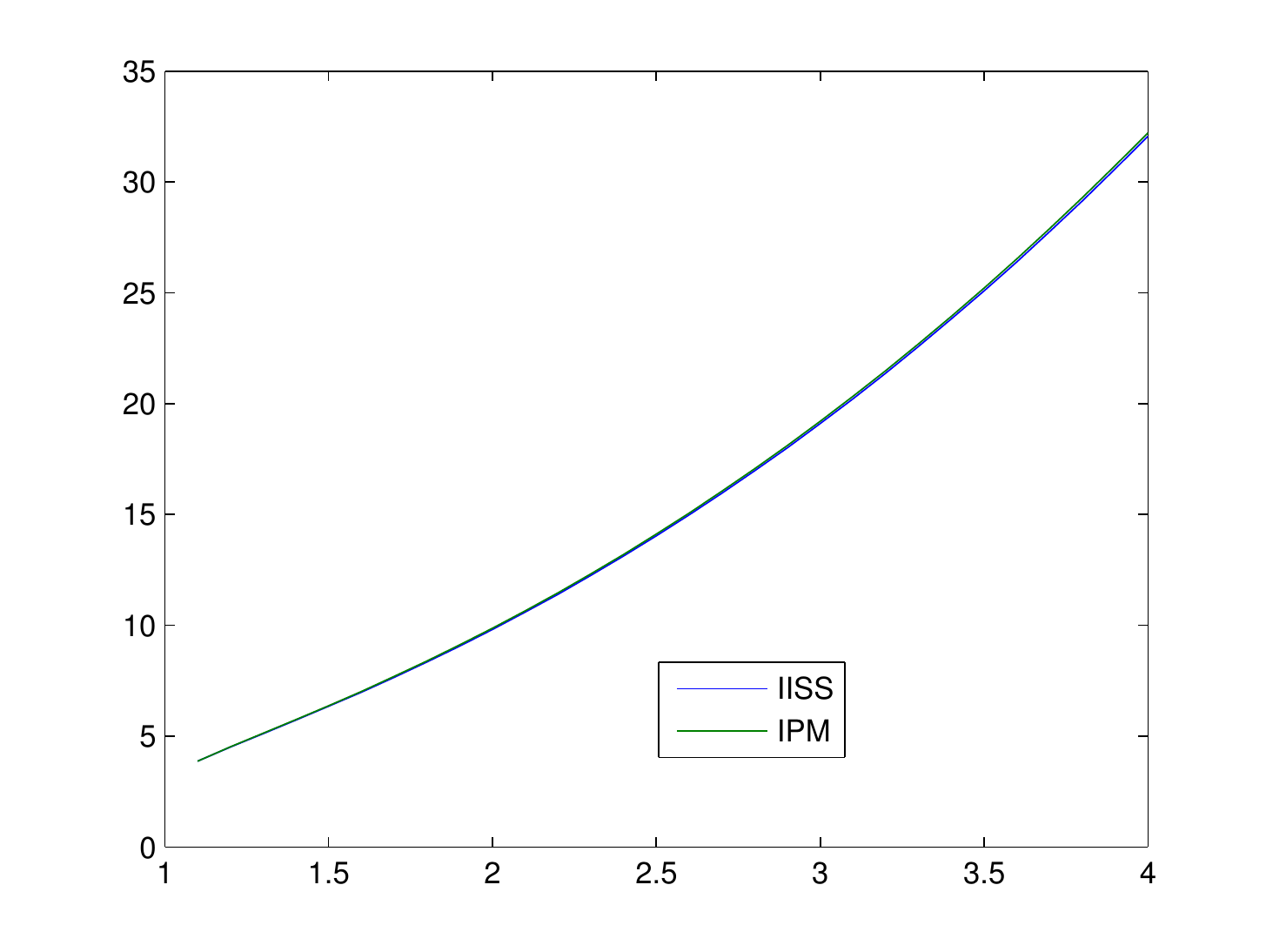}
\includegraphics[height = 4cm]{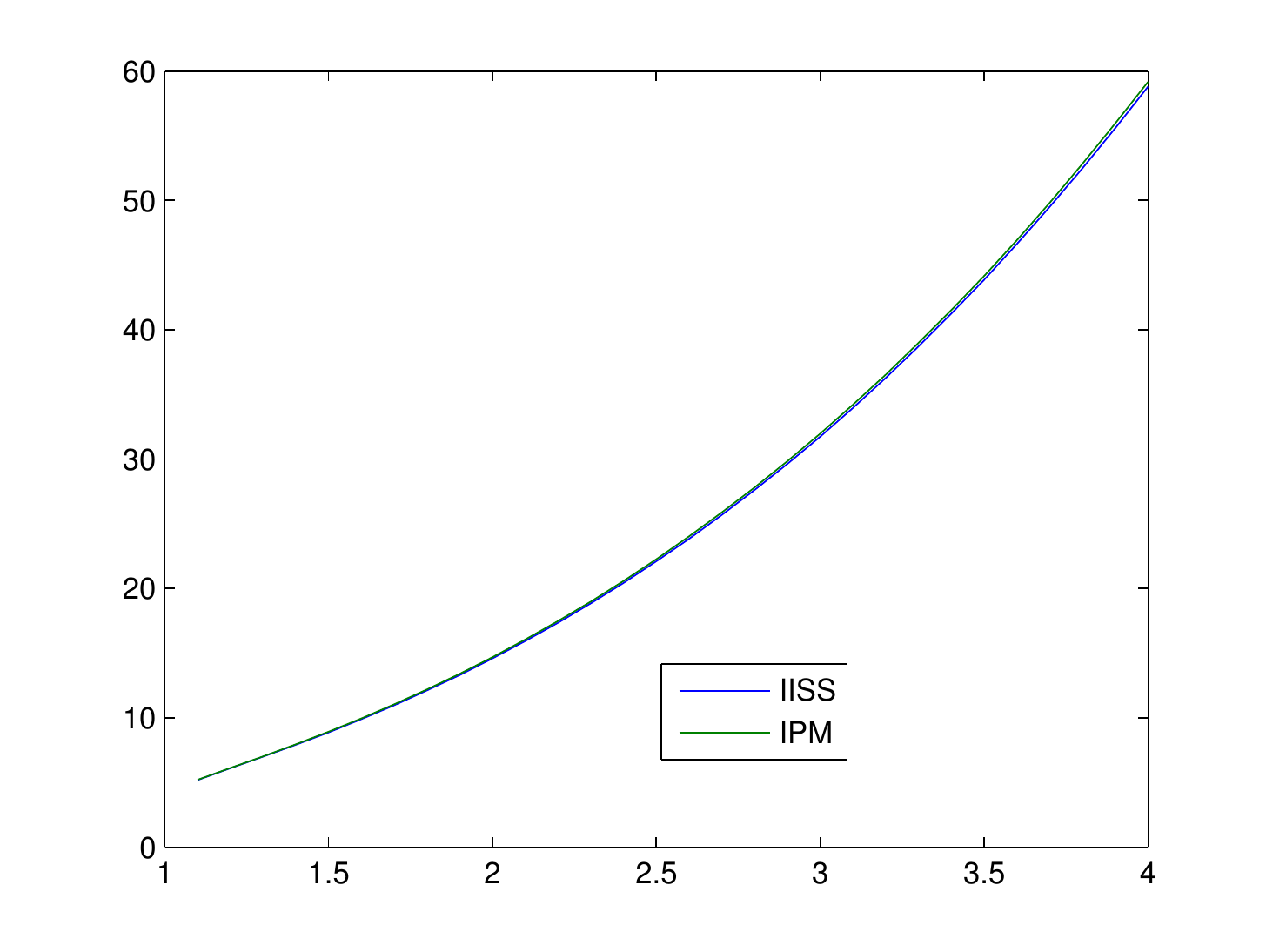}
\end{center}
\caption{Graphs of $\lambda_{p}$ versus $p$ for the N-dimensional unit ball
and $N=2$ (left), $N=3$ (center), $N=4$ (right).}%
\label{fig1GraficoAutovalores}%
\end{figure}

\vspace{1cm}

\subsection{Square, Cube and Torus}

To compute eigenvalues on more general domains, we use a $p$-version finite
element discretization on unstructured hexahedral meshes. The discrete
equations are solved with PETSc \cite{petsc-user-ref} using a Newton-Krylov
method in which a matrix associated with a lowest-order discretization is
assembled for preconditioning, while the high-order operator is applied in
unassembled form (see \cite{brown2010dohp} for details). For these more
complicated domains we apply Algorithm 2. This produces the system
\[
-\nabla\cdot\Big( \big(\epsilon^{2} + \left|  \nabla\phi_{m+1} \right|  ^{2}
\big)^{\frac{p-2}{2}} \nabla\phi_{m+1} \Big) = \left(  \frac{\phi_{m}%
}{\left\|  \phi_{m} \right\|  _{\infty}} \right)  ^{q-1}%
\]
where $\epsilon=10^{-5}$ is the
regularization used to avoid the singularity or degeneracy at $\nabla\phi= 0$.
The initial guess for the Newton iteration is taken to be $\phi_{m}$ which
leads to very fast convergence in the terminal phase. To solve 2D problems
with the 3D discretization, homogeneous Neumann boundary conditions are
imposed on both faces in the $z$ direction. The source code is publicly
available from \url{https://github.com/jedbrown/dohp}.

Table~\ref{tab:cube} shows computed eigenvalues for the unit square and unit
cube. These solutions were computed using $Q_{5}$ elements and are as accurate
as double precision rounding error for the smooth solutions in the $p=2$ case.
The accuracy of the discretization for a given smooth solution has been
verified to be essentially independent of $p$ using the method of manufactured
solutions. This indicates that the primary source of error in
Table~\ref{tab:cube} is interpolation error, as usual for finite element methods.

\begin{table}[ptb]
\caption{Eigenvalues of the $p$-Laplacian on the unit square and cube. The 2D
results use a $10\times10$ mesh of $Q_{5}$ elements, the 3D results use a
$6\times6\times6$ mesh of $Q_{5}$ elements. For $p=2$, the exact solutions
$2\pi^{2}$ and $3\pi^{2}$ are available so we show the error in the Reference
column; these cases are denoted by [*]. }%
\label{tab:cube}
\centering
\begin{tabular}
[c]{llllllll}\hline
&  & $2D$ &  &  &  & $3D$ & \\\cline{2-4}\cline{6-8}%
$p$ & Computed &  & Reference &  & Computed &  & Reference\\\hline
1.2 & 6.195550328210643 &  &  &  & 8.642315135978254 &  & \\
1.5 & 10.07201415299496 &  & 10.0722~\cite{bognar2003} &  &
14.47791516619582 &  & \\
1.75 & 14.28146165697044 &  & 14.2815~\cite{bognar2003} &  &
20.96672431961172 &  & \\
2 & 19.73920880217817 &  & 5.36$\times$10$^{-13}$[*] &  & 29.60881320326431 &
& 3.77$\times$10$^{-12}$[*]\\
2.2 & 25.24862830212583 &  & 25.2412~\cite{BEM1} &  & 38.51651963302274 &  &
\\
2.5 & 35.94868349730170 &  & 35.9493~\cite{bognar2003} &  &
56.19031685699854 &  & \\
3 & 62.75762286200781 &  & 62:7633~\cite{bognar2003} &  & 101.8697977481977 &
& \\
4 & 176.5980821441738 &  & 176.693~\cite{bognar2003} &  & 306.1647710559179 &
& \\
5 & 463.8206306371868 &  &  &  & 849.9777670614186 &  & \\\hline
\end{tabular}
\end{table}

Figure~\ref{fig:torus} shows computed eigenfunctions for $p=1.2$ and $p=5$ on
a torus. The unstructured hexahedral mesh was created with CUBIT version
13.0~\cite{blacker1994cmg} using the commands
\begin{verbatim}
  create torus major radius 1 minor radius 0.4
  webcut volume all with plane xplane offset 0
  mesh volume 1 2
\end{verbatim}

\noindent and a $Q_{2}$ discretization was used. The computed eigenvalues are
$\lambda_{p} = 7.800846$ for $p=1.2$ and $\lambda_{p} = 2064.08$
for $p=5$.

\begin{figure}[ptb]
\begin{center}
\includegraphics[width=0.8\textwidth]{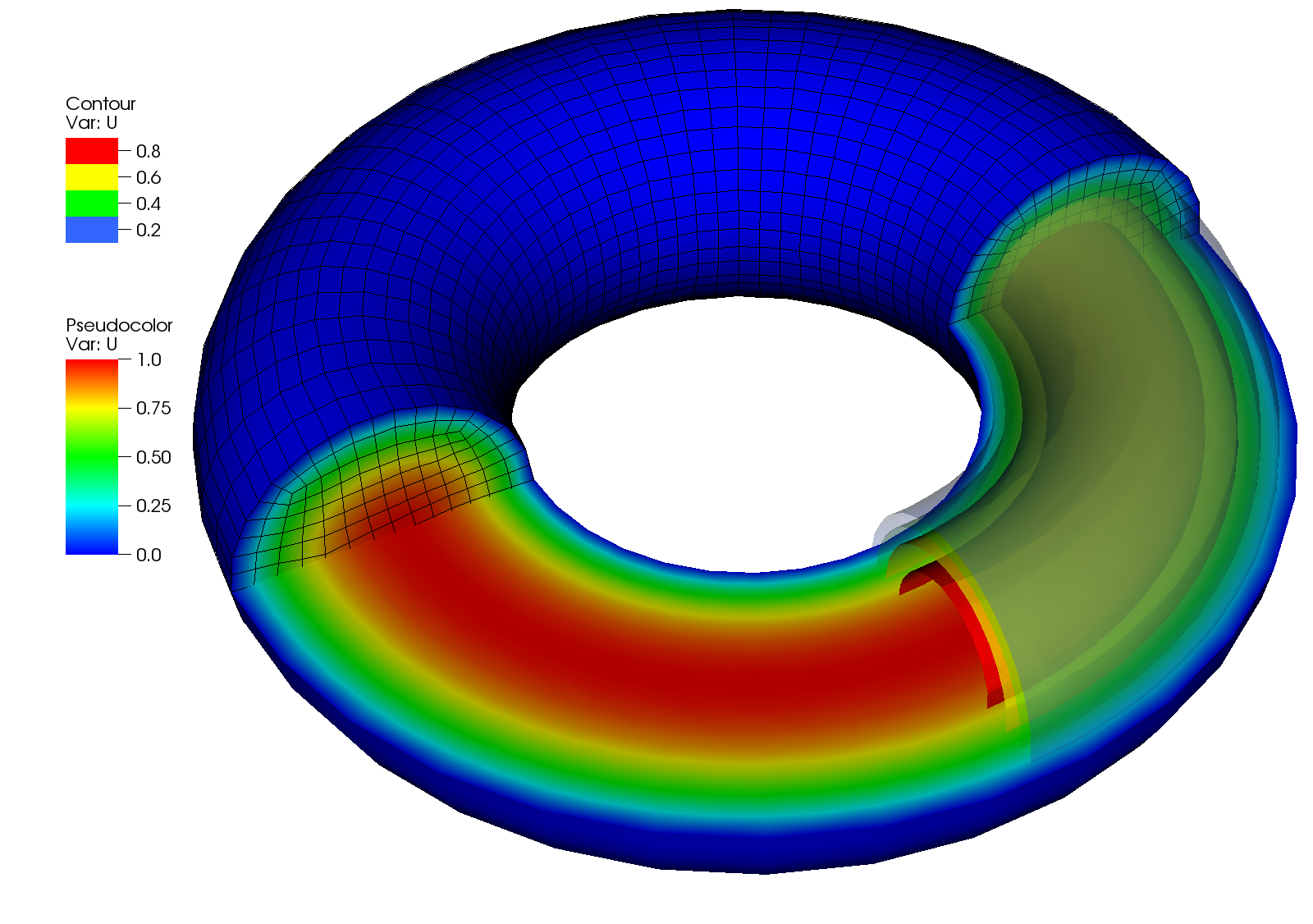} \newline%
\includegraphics[width=0.8\textwidth]{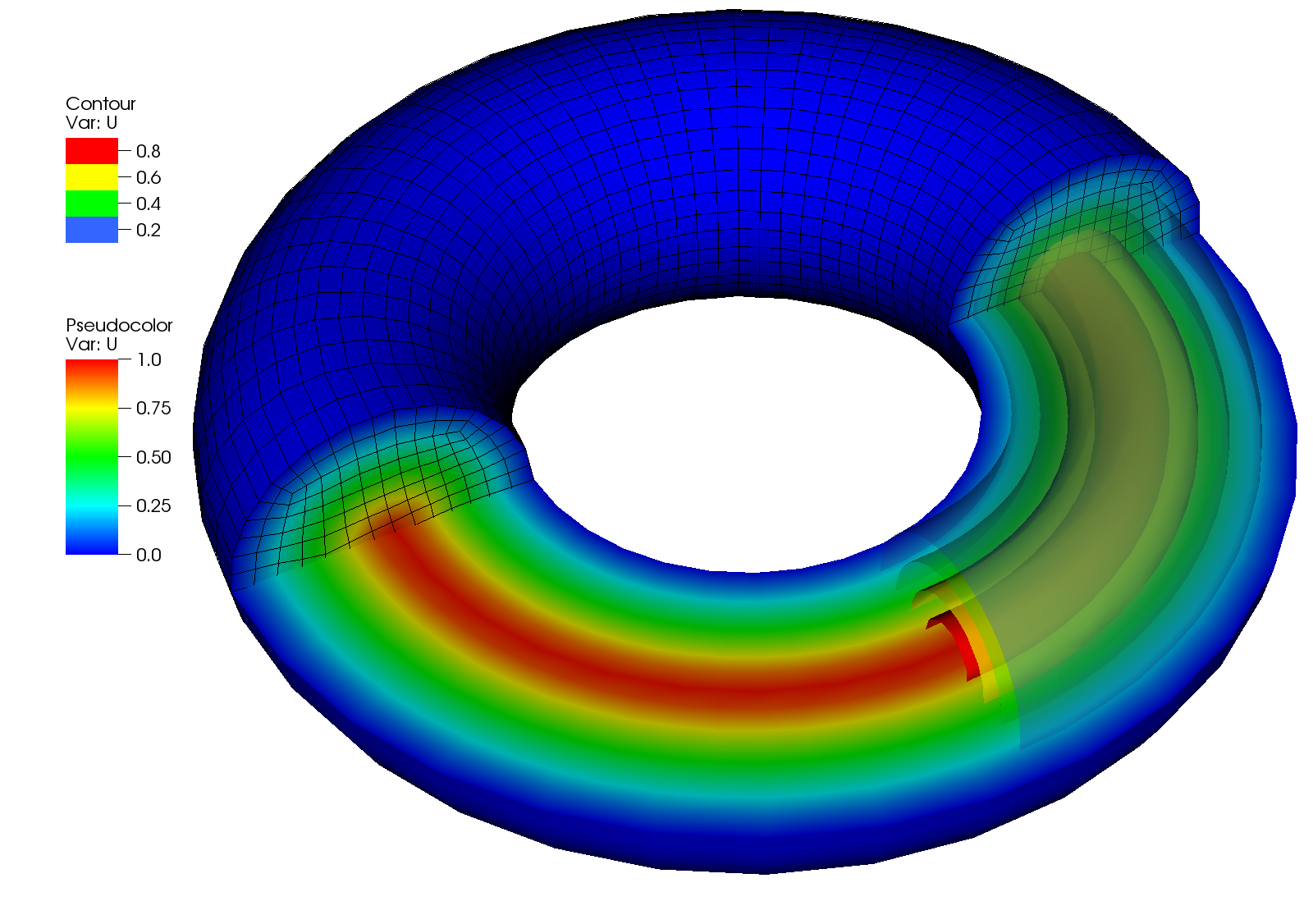}
\end{center}
\caption{Eigenfunctions of the $p$-Laplacian for $p=1.2$ (top) and $p=5$
(bottom) computed on a torus with major radius $1$ and minor radius
$0.4$.}%
\label{fig:torus}%
\end{figure}

Experimental evidence suggests that Algorithm 2 converges with $q=p$, but we
have only been able to prove convergence for $q < p$. It is unknown whether
the iteration will break down for some domain when $q=p$, but one can always
compute with $q<p$ in which case Theorem~8 guarantees convergence with an
error less than $K(p-q)$ for some positive constant $K$ depending only on the
domain and $p$. Table~\ref{tab:qtop} shows numerical evidence of this result
and quantifies $K$ for the unit cube with $p=1.5$ and $p=3$.

\begin{table}[ptb]
\caption{Convergence of the computed eigenvalue $\lambda_{p,q} \to\lambda
_{p}^{-}$ as $q\to p^{-}$ for the unit cube using a $4\times4\times4$ mesh
with $Q_{5}$ discretization.}%
\label{tab:qtop}
\centering
\begin{tabular}
[c]{llllllll}\hline
&  & $p=1.5$ &  &  &  & $p=3$ & \\\cline{2-4}\cline{6-8}%
$p-q$ & $\lambda_{p,q}$ &  & $\lambda_{p}-\lambda_{p,q}$ &  & $\lambda_{p,q}$
&  & $\lambda_{p}-\lambda_{p,q}$\\\hline
10$^{-1}$ & 13.797661713072 &  & 6.7943$\times$10$^{-1}$ &  &
96.414190427672 &  & 5.4559\\
10$^{-2}$ & 14.405694866696 &  & 7.1393$\times$10$^{-2}$ &  &
101.31034449471 &  & 5.5975$\times$10$^{-1}$\\
10$^{-3}$ & 14.469912150762 &  & 7.1756$\times$10$^{-3}$ &  &
101.81397339451 &  & 5.6119$\times$10$^{-2}$\\
10$^{-4}$ & 14.476369813850 &  & 7.1792$\times$10$^{-4}$ &  &
101.86447907670 &  & 5.6133$\times$10$^{-3}$\\
10$^{-5}$ & 14.477015941408 &  & 7.1796$\times$10$^{-5}$ &  &
101.86953107554 &  & 5.6135$\times$10$^{-4}$\\
10$^{-6}$ & 14.477080557778 &  & 7.1796$\times$10$^{-6}$ &  &
101.87003628973 &  & 5.6135$\times$10$^{-5}$\\
0 & 14.477087737416 &  & - &  & 101.87009242480 &  & -\\\hline
\end{tabular}\end{table}

\begin{table}[ptb]
\caption{Convergence rate for inverse iteration applied to the torus with
$p=1.2$ and $p=5$. Each nonlinear solve is converged to a relative tolerance
of $10^{-8}$.}%
\label{tab:torusconv}
\centering
\begin{tabular}
[c]{clcl}%
\multicolumn{2}{c}{$p=1.2$} & \multicolumn{2}{c}{$p=5$}\\
\cmidrule(r){1-2} \cmidrule(l){3-4} {Newton its.} & {$\lambda_{p}$} & {Newton
its.} & {$\lambda_{p}$}\\
\midrule 37 & {torsion} & 18 & {torsion}\\
5 & 7.7670871 & 6 & 1628.81\\
4 & 7.7965212 & 4 & 1975.40\\
3 & 7.8003037 & 4 & 2043.11\\
3 & 7.8007802 & 3 & 2057.15\\
2 & 7.8008389 & 3 & 2061.17\\
2 & 7.8008456 & 3 & 2062.74\\
2 & 7.8008462 & 3 & 2063.35\\
&  & 3 & 2063.38\\
&  & 3 & 2063.98\\
&  & 3 & 2064.08\\
\bottomrule &  &  &
\end{tabular}
\end{table}


In practice, the total computational cost to solve the eigenvalue problem is
about twice that of only solving the torsion creep problem.
Table~\ref{tab:torusconv} shows the convergence of inverse iteration when the
Newton iteration at each step is started using the solution at the last
iteration. The initial guess for the torsion creep problem is zero, which
leads to a difficult nonlinear solve. The Newton iteration is guarded by a
cubic backtracking line search which is sufficient in this case; a parameter
continuation or grid sequencing is more robust. The torsion creep problem is
significantly easier to solve for less extreme values of $p$ or for larger
values of the regularization $\epsilon$. After the torsion creep problem has
been solved, a line search is no longer necessary and accurate estimates of
the eigenvalue can be obtained in a few more Newton iterations.


\section{Acknowledgments}

\noindent The authors would like to thank the support of FAPEMIG and CNPq.

\end{document}